\theoremstyle{plain}
\newtheorem{theorem}{Theorem}[section]
\newtheorem{lemma}[theorem]{Lemma}
\newcommand{\Ker}{\mathrm{Ker}}
\newcommand{\diag}{\mathrm{diag}}
\title{Verified partial eigenvalue computations using contour integrals\\for Hermitian generalized eigenproblems}
\author{Akira Imakura\thanks{Faculty of Engineering, Information and Systems, University of Tsukuba, 1-1-1 Tennodai, Tsukuba, Ibaraki 305-8573, Japan} \thanks{\texttt{imakura@cs.tsukuba.ac.jp}} \and Keiichi Morikuni\footnotemark[1] \thanks{\texttt{morikuni@cs.tsukuba.ac.jp}} \and Akitoshi Takayasu\footnotemark[1] \thanks{\texttt{takitoshi@risk.tsukuba.ac.jp}}}
\date{}
\begin{document}	
\maketitle
		
\begin{abstract}
	We propose a verified computation method for partial eigenvalues of a Hermitian generalized eigenproblem.
	The block Sakurai--Sugiura Hankel method, a contour integral-type eigensolver, can reduce a given eigenproblem into a generalized eigenproblem of block Hankel matrices whose entries consist of complex moments.
	In this study, we evaluate all errors in computing the complex moments.
	We derive a truncation error bound of the quadrature.
	Then, we take numerical errors of the quadrature into account and rigorously enclose the entries of the block Hankel matrices.
	Each quadrature point gives rise to a linear system, and its structure enables us to develop an efficient technique to verify the approximate solution.	
	Numerical experiments show that the proposed method outperforms a standard method and infer that the proposed method is potentially efficient in parallel.
\end{abstract}

\textit{Keywords}: partial eigenproblem, contour integral, complex moment, verified numerical computations

\textit{MSC} 65F15, 65G20, 65G50
		
			

\section{Introduction}
We consider verifying the $m$ eigenvalues $\lambda_i$, counting multiplicity, of the Hermitian generalized eigenproblem 
\begin{equation}
	A \boldsymbol{x}_i = \lambda_i B \boldsymbol{x}_i, \quad \boldsymbol{x}_i \in \mathbb{C}^n \setminus \{ \boldsymbol{0} \}, \quad i = 1, 2, \dots, m
	\label{eq:gevp}
\end{equation}
in a prescribed interval $\Omega = [a, b] \subset \mathbb{R}$, where $A = A^\mathsf{H} \in \mathbb{C}^{n \times n}$, $B = B^\mathsf{H} \in \mathbb{C}^{n \times n}$ is positive semidefinite, and the matrix pencil $zB-A$ ($z\in\mathbb{C}$) is regular\footnote{See Appends~\ref{app:regular_pencil} for the verification of regularity of a matrix pencil.}, i.e, $\det (zB-A)$ is not identically equal to zero.
We call $\lambda_i$ an \emph{eigenvalue} and $\boldsymbol{x}_i$ the corresponding \emph{eigenvector} of the problem \eqref{eq:gevp} or \emph{matrix pencil} $z B - A$, $z \in \mathbb{C}$ interchangeably.
Throughout, we assume that the number of eigenvalues in the interval $\Omega$ is known to be $m$ and there do not exist eigenvalues of \eqref{eq:gevp} at the end points $a$, $b \in \mathbb{R}$.
We also denote the eigenvalues of \eqref{eq:gevp} outside $\Omega$ by $\lambda_i$ ($i = m+1, m+2, \dots, r$), where $r = \mathrm{rank}\,B$.
\par
There are plenty of previous works for verification methods of eigenvalue problems (see, e.g., \cite{Rump2010} and references therein).
These previous works, in particular, for symmetric generalized eigenvalue problems are classified into two kinds: some of them aim at rigorously enclosing specific eigenvalues, and others aim at rigorously enclosing all eigenvalues.
For the purposes, different approaches have been taken.
Behnke~\cite{Behnke1988} used Temple quotients, their generalizations, and the LDLT decomposition to verify specific eigenvalues.
Behnke~\cite{Behnke1991} used the variational principle to verify specific eigenvalues.
Watanabe et al.~\cite{WatanabeYamamotoNakao1999TJSIAM} used an approximate diagonalization and generalized Rump's method, avoiding the Cholesky factorization, to verify the eigenvalue with the maximum magnitude.
Yamamoto~\cite{Yamamoto2001LAA} combined the LDLT decomposition with Sylvester's law of inertia to verify specific eigenvalues.
Maruyama et al.~\cite{MaruyamaOgitaNakayaOishi2004} used Ger{\v s}hgorin's theorem to verify all eigenpairs.
Miyajima et al.~\cite{MiyajimaOgitaRumpOishi2010} used the techniques in \cite{MiyajimaOgitaOishi2005TJSIAM, MiyajimaOgitaOishi2006TJSIAM} and combined it with Rump and Wilkinson's bounds to verify all eigenpairs.
See \cite{Miyajima2014SIMAX} and references therein for the non-Hermitian case.
\par
In this study, we develop a verification method for partial eigenvalues using the block Hankel-type Sakurai--Sugiura method \cite{IkegamiSakuraiNagashima2010}, which receives attentions in recent years by virtue of the scalability in parallel and versatility \cite{ImakuraDuSakurai2016JJIAM}.
We shed light on a new perspective of this method.
This method uses contour integrals to form complex moment matrices.
Their truncation errors for the trapezoidal rule of numerical quadrature were derived by Miyata et al.~\cite{MiyataDuSogabeYamamotoZhang2009}.
Thanks to their work, we derive a numerically computable enclosure of the complex moment.
We point out that our verification method works for multiple eigenvalues in the prescribed region and for semidefinite $B$, whereas the previous methods \cite{Behnke1988, Behnke1991, Yamamoto2001LAA, MaruyamaOgitaNakayaOishi2004, MiyajimaOgitaOishi2005TJSIAM, MiyajimaOgitaOishi2006TJSIAM, MiyajimaOgitaRumpOishi2010} work only for positive definite $B$.
In addition, for each quadrature point, a structured linear system of equations arises to solve.
The structure enables us to develop an efficient verification technique in case of $B$ being positive definite.
Yamamoto~\cite{Yamamoto1984JJAM} and Rump~\cite{Rump2013JCAM} derived componentwise and normwise bounds, respectively, of the error of the approximate solution.
See also \cite{Rump2010}.
These methods need a numerically computed inverse of the coefficient matrix, whereas the proposed technique does not need such a numerical inverse, and instead needs a lower bound of the smallest eigenvalue of $B$.
\par
In the rest of the paper, we use the following notations:
For a real matrix  $A = (a_{ij}) \in \mathbb{R}^{m \times n}$, a nonnegative matrix consisting of entrywise absolute values is denoted by $|A| = (|a_{ij}|)$.
For $B = (b_{ij}) \in \mathbb{R}^{m \times n}$ and $\alpha \in \mathbb{R}$, the inequality $A < B$ means $a_{ij} < b_{ij}$ holds entrywise and the inequality $A < \alpha$ means $a_{ij} < \alpha$ holds entrywise.
\par
The rest of this paper is organized as follows: In Section~\ref{sec:bSSHK}, we briefly review the block Sakurai--Sugiura Hankel method and its error analysis derived by Miyata et al.~\cite{MiyataDuSogabeYamamotoZhang2009}.
Thanks to this result, we derive a computable rigorous error bound for complex moment in Section~\ref{sec:bound}.
We also put several remarks on the implementation of our method in Section~\ref{sec:implementation}.
In Section~\ref{sec:examples}, we show two numerical examples illustrating the performance of our method.
In Section~\ref{sec:conclusions}, we conclude the paper for discussing potentials of our method for parallel implementation and future directions.
\section{Block Hankel-type Sakurai--Sugiura method} \label{sec:bSSHK}
We review the block Sakurai--Sugiura Hankel method \cite{IkegamiSakuraiNagashima2010}, which is the basis of the proposed method.
The block Sakurai--Sugiura Hankel method has parameters such as the block size $L \in \mathbb{N}_+$, the order of moment $M \in \mathbb{N}_+$, a random matrix $V \in \mathbb{C}^{n \times L}$ whose column vectors consist of a linear combination of all eigenvectors, the basis vectors of the kernel of $B$, say $\Ker\,B$, and the scaling parameters $(\gamma, \rho) \in \mathbb{R} \times \mathbb{R}$ for the eigenvalues.
The $p$\,th complex moment matrix is given by
\begin{equation}
	\mathsf{M}_p = \frac{1}{2 \pi \mathrm{i}} \oint_\Gamma \left(z-\gamma\right)^p V^\mathsf{H} B (z B - A)^{-1} B V \mathrm{d} z, \quad p = 0, 1, 2, \dots, 2 M - 1	\label{eq:moment}
\end{equation}
defined on the closed Jordan curve $\Gamma$ through the end points of the interval $\Omega = [a, b]$, where $\mathrm{i}=\sqrt{-1}$ is the imaginary unit and $\pi$ is the circle ratio.
Denote the block Hankel matrices consisting of the moments \eqref{eq:moment} by 
\begin{align*}
	H_M^< & = 
	\begin{bmatrix}
		\mathsf{M}_1   & \mathsf{M}_2     & \cdots & \mathsf{M}_{M}   \\
		\mathsf{M}_2   & \mathsf{M}_3     & \cdots & \mathsf{M}_{M+1} \\
		\vdots  & \vdots    & \ddots & \vdots    \\
		\mathsf{M}_{M} & \mathsf{M}_{M+1} & \cdots & \mathsf{M}_{2M-1}
	\end{bmatrix}
	\in \mathbb{C}^{LM \times LM}, \\
	H_M & = 
	\begin{bmatrix}	
		\mathsf{M}_0     & \mathsf{M}_1   & \cdots & \mathsf{M}_{M-1}  \\
		\mathsf{M}_1     & \mathsf{M}_2   & \cdots & \mathsf{M}_{M}    \\
		\vdots    & \vdots  & \ddots & \vdots     \\
		\mathsf{M}_{M-1} & \mathsf{M}_{M} & \cdots & \mathsf{M}_{2M-2}
	\end{bmatrix}\in \mathbb{C}^{LM \times LM}.
\end{align*}
Then, the following theorem show that the block Sakurai--Sugiura Hankel method can compute eigenvalues in a prescribed domain and their corresponding eigenvectors \cite[Theorems~5 and 6]{IkegamiSakuraiNagashima2010}.
\begin{theorem}\label{th:Hankel}
	Let an eigenvalue and the corresponding eigenvector of the regular part of the matrix pencil $zH_M - H_M^<$ be denoted by $\theta_i$ and $\boldsymbol{u}_i$, respectively.
	Let 
	\begin{equation*}
		S_p = \frac{1}{2 \pi \mathrm{i}} \oint_\Gamma (z-\gamma)^p (z B - A)^{-1} B V \mathrm{d}z
	\end{equation*}
	and $S = [S_0, S_1, \dots, S_{M-1}]$.
	If $\mathrm{rank}(H_M) = m$ holds, then the eigenvalue in $\Gamma$ and the corresponding eigenvector of $\eqref{eq:gevp}$ are given by $\lambda_i = \gamma + \theta_i$ and $\boldsymbol{x}_i = S \boldsymbol{u}_i$ ($i = 1, 2, \dots, m$), respectively.	
\end{theorem}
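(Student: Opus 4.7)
The plan is to derive a Vandermonde-type factorization of the block Hankel matrices from a partial-fraction expansion of $(zB-A)^{-1}B$, and then read off the finite eigenpairs of the pencil $zH_M-H_M^<$ directly from the factors. The main technical hurdle will be justifying the spectral decomposition of $(zB-A)^{-1}B$ when $B$ is only semidefinite; one must verify via a generalized Schur/Weierstrass form that the finite eigenvectors can be chosen $B$-orthonormal and that infinite eigenvalues (those associated with $\Ker\,B$) contribute no poles to $(zB-A)^{-1}BV$ on the contour.

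First, using that $zB-A$ is a regular Hermitian pencil with $B\succeq 0$, select a $B$-orthonormal basis $\{\boldsymbol{x}_i\}_{i=1}^{r}$ of finite eigenvectors so that $(zB-A)^{-1}B = \sum_{i=1}^{r} \boldsymbol{x}_i\boldsymbol{x}_i^{\mathsf H} B/(z-\lambda_i)$ on $\mathrm{range}\,B$. Applying Cauchy's residue theorem to the integrals defining $S_p$ and $\mathsf{M}_p$ (valid since no eigenvalues lie on $\Gamma$) then yields
\begin{equation*}
S_p = X_m \Lambda_m^p \tilde V, \qquad \mathsf{M}_p = \tilde V^{\mathsf H}\Lambda_m^p \tilde V,
\end{equation*}
where $X_m = [\boldsymbol{x}_1,\dots,\boldsymbol{x}_m]$ collects the eigenvectors whose eigenvalues lie inside $\Gamma$, $\Lambda_m=\diag(\lambda_1-\gamma,\dots,\lambda_m-\gamma)$, and $\tilde V := X_m^{\mathsf H} B V \in \mathbb{C}^{m\times L}$.

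Next, introduce $W := [\tilde V,\, \Lambda_m \tilde V,\, \dots,\, \Lambda_m^{M-1}\tilde V] \in \mathbb{C}^{m\times LM}$. The block-$(i,j)$ entry of $H_M$ then becomes $\mathsf{M}_{i+j-2} = (\Lambda_m^{i-1}\tilde V)^{\mathsf H}(\Lambda_m^{j-1}\tilde V)$, and similarly for $H_M^<$ with an extra factor of $\Lambda_m$ inserted. This produces the clean factorizations
\begin{equation*}
H_M = W^{\mathsf H} W, \qquad H_M^< = W^{\mathsf H}\,\Lambda_m\,W.
\end{equation*}
Because $\mathrm{rank}(H_M)=m$ and $W\in\mathbb{C}^{m\times LM}$, the matrix $W$ must have full row rank $m$, so $W^{\mathsf H}$ is injective on $\mathrm{range}\,W$.

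Finally, analyze the regular part of $zH_M-H_M^<$: a finite generalized eigenpair $(\theta_i,\boldsymbol{u}_i)$ satisfies $W^{\mathsf H}(\theta_i I - \Lambda_m)W\boldsymbol{u}_i=\boldsymbol{0}$, which by injectivity of $W^{\mathsf H}$ reduces to $\Lambda_m W\boldsymbol{u}_i = \theta_i W\boldsymbol{u}_i$. Hence $\theta_i \in \{\lambda_j-\gamma : j=1,\dots,m\}$ and $W\boldsymbol{u}_i$ lies in the corresponding eigenspace of $\Lambda_m$. Recognizing $S = [S_0,\dots,S_{M-1}] = X_m W$ from the expressions for $S_p$ above, one obtains $S\boldsymbol{u}_i = X_m W\boldsymbol{u}_i$, a nonzero element of the $\lambda_i$-eigenspace of the original pencil, which establishes the claimed identification of both eigenvalues and eigenvectors (up to the usual scaling).
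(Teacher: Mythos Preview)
The paper does not supply its own proof of this theorem; it is stated with a citation to \cite[Theorems~5 and 6]{IkegamiSakuraiNagashima2010}. However, immediately after the statement the paper derives, via the Weierstrass canonical form, precisely the ingredients you use: the expression $\mathsf{M}_p = V^{\mathsf H}BX_\Omega \Lambda_\Omega^p X_\Omega^{\mathsf H}BV$ (your $\tilde V^{\mathsf H}\Lambda_m^p\tilde V$), the formula $S_p = X_\Omega \Lambda_\Omega^p X_\Omega^{\mathsf H}BV$ (your $S_p = X_m\Lambda_m^p\tilde V$), and the factorization $H_M = S^{\mathsf H}BS$, which is equivalent to your $H_M = W^{\mathsf H}W$ once one uses $X_\Omega^{\mathsf H}BX_\Omega = I_m$. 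So your argument is correct and follows the same route as the paper's ambient computations; you simply carry the reasoning through to the eigenvalue extraction step, which the paper delegates to the reference.

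One small completeness point: your final paragraph shows that every regular eigenpair $(\theta_i,\boldsymbol{u}_i)$ of $zH_M - H_M^<$ satisfies $\theta_i\in\{\lambda_j-\gamma:j=1,\dots,m\}$ and that $S\boldsymbol{u}_i$ is a corresponding eigenvector of the original pencil. To match the theorem's assertion that \emph{all} $m$ eigenvalues inside $\Gamma$ are recovered with the correct multiplicities, you should also note the converse direction. Since $W$ has full row rank it admits a right inverse $W^\dagger$, and for each $j$ the vector $\boldsymbol{u}=W^\dagger\boldsymbol{e}_j$ gives $W\boldsymbol{u}=\boldsymbol{e}_j$, producing a regular eigenpair of $zH_M-H_M^<$ with eigenvalue $\lambda_j-\gamma$; because $\mathrm{rank}(H_M)=m$ the regular part has exactly $m$ eigenvalues counting multiplicity, so the two sets coincide.
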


We remark that the condition $\mathrm{rank}(H_M) = m$ implies $LM \ge m$.

Next, we give a relationship between the target eigencomponents in the columns of $V$ and the rank of $H_M$.
Recall the Weierstrass canonical form of the matrix pencil $zB - A$ \cite[Proposition~7.8.3]{Bernstein2018}.
There exists a nonsingular matrix $X \in \mathbb{C}^{n \times n}$ such that $X^\mathsf{H} (zB - A) X = z \mathrm{I}_0 - \Lambda$, where $\mathrm{I}_0$ is a diagonal matrix whose leading $r$ diagonal entries are one and whose trailing $n-r$ diagonal entries are zeros, and $\Lambda$ is a diagonal matrix whose leading $r$ diagonal entries are the eigenvalues of \eqref{eq:gevp} and whose trailing $n-r$ diagonal entries are one.
Note that the columns of $X = [\boldsymbol{x}_1, \boldsymbol{x}_2, \dots, \boldsymbol{x}_n]$ are the appropriately scaled eigenvectors of matrix pencil $z B - A$, where $\boldsymbol{x}_1$, $\boldsymbol{x}_2$, \dots $\boldsymbol{x}_r \in \mathbb{C}^n$ correspond to the eigenvalues $\lambda_1$, $\lambda_2$, \dots, $\lambda_r \in \mathbb{R}$, respectively, and $\boldsymbol{x}_k$, $\boldsymbol{x}_{k+1}$, \dots $\boldsymbol{x}_n \in \mathbb{C}^n$ form a basis of $\Ker\,B$.
Then, from $X^\mathsf{H} B X = \mathrm{I}_0$ and the residue theorem, the complex moment \eqref{eq:moment} is expressed as
\begin{align}\label{eq:mu_p}
	\mathsf{M}_p &= \frac{1}{2 \pi \mathrm{i}} \oint_\Gamma \left(z-\gamma\right)^p V^\mathsf{H} B X (z\mathrm{I}_0 - \Lambda)^{-1} \mathrm{I}_0 X^{-1} B V \mathrm{d} z \\
	& = \frac{1}{2 \pi \mathrm{i}} \oint_\Gamma \left(z-\gamma\right)^p \sum_{k=1}^{r}\left(\frac{V^\mathsf{H}B\boldsymbol{x}_k\boldsymbol{x}_k^\mathsf{H}BV}{z-\lambda_k}\right)\mathrm{d} z\\
	& = \sum_{k=1}^m(\lambda_k-\gamma)^p \mathcal{V}_k,
\end{align} 
where $\mathcal{V}_k = V^\mathsf{H} B \boldsymbol{x}_k \boldsymbol{x}_k^\mathsf{H} B V \in \mathbb{C}^{L \times L}$.
This is represented by
\[
\mathsf{M}_p = V^\mathsf{H} B X_{\Omega}\Lambda_{\Omega}^p X_{\Omega}^\mathsf{H} B V,\quad X_{\Omega}=[\boldsymbol{x}_1, \boldsymbol{x}_2, \dots, \boldsymbol{x}_m],\quad \Lambda_{\Omega}=\mathrm{diag}(\lambda_1-\gamma,\lambda_2-\gamma,\dots,\lambda_m-\gamma).
\]
Using this form, we have
\begin{align*}
	H_M&=\begin{bmatrix}
		V^\mathsf{H} B X_{\Omega} X_{\Omega}^\mathsf{H} B V     & V^\mathsf{H} B X_{\Omega}\Lambda_{\Omega} X_{\Omega}^\mathsf{H} B V   & \cdots & V^\mathsf{H} B X_{\Omega}\Lambda_{\Omega}^{M-1} X_{\Omega}^\mathsf{H} B V  \\
		V^\mathsf{H} B X_{\Omega}\Lambda_{\Omega} X_{\Omega}^\mathsf{H} B V     & V^\mathsf{H} B X_{\Omega}\Lambda_{\Omega}^2 X_{\Omega}^\mathsf{H} B V   & \cdots & V^\mathsf{H} B X_{\Omega}\Lambda_{\Omega}^{M} X_{\Omega}^\mathsf{H} B V    \\
		\vdots    & \vdots  & \ddots & \vdots     \\
		V^\mathsf{H} B X_{\Omega}\Lambda_{\Omega}^{M-1} X_{\Omega}^\mathsf{H} B V & V^\mathsf{H} B X_{\Omega}\Lambda_{\Omega}^{M} X_{\Omega}^\mathsf{H} B V & \cdots & V^\mathsf{H} B X_{\Omega}\Lambda_{\Omega}^{2M-2} X_{\Omega}^\mathsf{H} B V
	\end{bmatrix}\\
	&=\begin{bmatrix}
		V^\mathsf{H} B X_{\Omega} X_{\Omega}^\mathsf{H}\\
		V^\mathsf{H} B X_{\Omega}\Lambda_{\Omega} X_{\Omega}^\mathsf{H}\\
		\vdots\\
		V^\mathsf{H} B X_{\Omega}\Lambda_{\Omega}^{M-1} X_{\Omega}^\mathsf{H}
	\end{bmatrix}B\begin{bmatrix}
		X_{\Omega} X_{\Omega}^\mathsf{H} B V     & X_{\Omega}\Lambda_{\Omega} X_{\Omega}^\mathsf{H} B V   & \cdots & X_{\Omega}\Lambda_{\Omega}^{M-1} X_{\Omega}^\mathsf{H} B V
	\end{bmatrix}\\
	&=S^{\mathsf{H}}BS,
\end{align*}
where
\begin{align*}
	S & = [S_0, S_1, \dots, S_{M-1}], \\
	S_p & = \frac{1}{2 \pi \mathrm{i}} \oint_\Gamma (z-\gamma)^p (z B - A)^{-1} B V \mathrm{d}z=X_{\Omega}\Lambda^p_{\Omega}X_{\Omega}^HBV=\sum_{k=1}^m(\lambda_k-\gamma)^p \boldsymbol{x}_k \boldsymbol{x}_k^\mathsf{H} B V.
\end{align*}
Meanwhile, it follows that the range of $S$ satisfies
\[
R(S)=\bigoplus_{p=0}^{M-1}R(S_p)\subset{\mathrm{span}}\left\{\boldsymbol{x}_k:\boldsymbol{x}_k^\mathsf{H}BV\neq 0,\quad k=1,2,\dots,m \right\}.
\]
This implies
\[
\dim\left(R(S)\right)\le \dim\left({\mathrm{span}}\left\{\boldsymbol{x}_k:\boldsymbol{x}_k^\mathsf{H}BV\neq 0,\quad k=1,2,\dots,m \right\}\right).
\]
If we set $V$ such that $\boldsymbol{x}_k^\mathsf{H}BV= 0$ for some $k=1,2,\dots,m$, then $\dim\left(R(S)\right)<m$ and 
the Hankel matrix $H_M$ becomes singular.

\par
In practice, the method uses the $N$-point trapezoidal rule to approximate the complex moment \eqref{eq:moment} multiplied by $\rho^{-(p+1)}$.
We take a domain of integration $\Gamma$ in \eqref{eq:moment} as the circle
\begin{equation}
	\Gamma = \{z \in \mathbb{C}| z = \gamma + \rho \mathrm{e}^{ \mathrm{i} \theta}, \theta \in \mathbb{R} \}, \quad
	\gamma = \frac{b+a}{2}, \quad
	\rho = \frac{b-a}{2}
	\label{eq:circle}
\end{equation}
and approximate the complex moment \eqref{eq:moment} with the following equi-distributed quadrature points:
\begin{equation}
	z_j=\gamma+\rho \mathrm{e}^{\mathrm{i}\theta_j},\quad \theta_j = \frac{2j - 1}{N} \pi, \quad j=1, 2, \dots, N.
	\label{eq:quadpoint}
\end{equation}
We review the error analysis in \cite{MiyataDuSogabeYamamotoZhang2009} to derive a rigorous error bound of the complex moment \eqref{eq:moment} in Section~\ref{sec:bound}.
The trapezoidal rule with the equi-distributed quadrature points \eqref{eq:quadpoint} approximates the complex moment \eqref{eq:moment} as
\begin{align}
	\mathsf{M}_p^{(N)} 
	= \frac{1}{N}\sum_{j=1}^{N}\left(\rho \mathrm{e}^{\mathrm{i}\theta_j}\right)^{p+1}\left(\sum_{k=1}^{r}\frac{\mathcal{V}_k}{\rho \mathrm{e}^{\mathrm{i}\theta_j}-(\lambda_k-\gamma)} \right) = \sum_{k=1}^r \mathcal{V}_k \left(\frac{1}{N}\sum_{j=1}^{N}\rho^{p}\mathrm{e}^{\mathrm{i}p\theta_j}\frac{\rho \mathrm{e}^{\mathrm{i}\theta_j}}{\rho \mathrm{e}^{\mathrm{i}\theta_j}-(\lambda_k-\gamma)}\right). \label{eq:hat_mup}
\end{align}
Since the number of eigenvalues inside $\Gamma$ is $m$, $\left| (\lambda_k-\gamma) / \rho \right| < 1$ holds for $k = 1$, $2$, $\dots$, $m$.
Noting the sum of geometric series, the quantity in the parentheses in \eqref{eq:hat_mup} for $k=1$, $2$, $\dots$, $m$ is written as 
\begin{align}
	\frac{1}{N}\sum_{j=1}^{N}\rho^{p}\mathrm{e}^{\mathrm{i}p\theta_j}\frac{\rho \mathrm{e}^{\mathrm{i}\theta_j}}{\rho \mathrm{e}^{\mathrm{i}\theta_j}-(\lambda_k-\gamma)} & = \frac{1}{N} \sum_{j=1}^{N}\rho^{p}\mathrm{e}^{\mathrm{i}p\theta_j}\left(\sum_{\ell=0}^{\infty}\left(\frac{\lambda_k-\gamma}{\rho \mathrm{e}^{\mathrm{i}\theta_j}}\right)^\ell\right) \notag \\
	& = \sum_{\ell=0}^\infty\rho^{p-\ell}(\lambda_k-\gamma)^\ell\left(\frac{1}{N}\sum_{j=1}^{N}\mathrm{e}^{\mathrm{i}(p-\ell)\theta_j}\right)\nonumber\\
	& = \sum_{s=0}^{\infty}\rho^{-sN}(\lambda_k-\gamma)^{p+sN} = (\lambda_k-\gamma)^p\left(\frac{1}{1-\left(\frac{\lambda_k-\gamma}{\rho}\right)^N}\right). \label{eq:mu_in}
\end{align}
Here, we set $p-\ell = -sN$ ($s = 0$, $1$, $2$, $\dots$), due to the property
\begin{equation*}
	\frac{1}{N}\sum_{j=1}^{N}\mathrm{e}^{\mathrm{i}h\theta_j} = 
	\begin{cases}
		1 \quad (h\in N\mathbb{Z}),\\
		0 \quad (\mathrm{otherwise}).
	\end{cases}
\end{equation*}
The other $r-m$ eigenvalues $\lambda_k$ ($k = m+1$, $m+2$, $\dots$, $r$) outside the domain $\Gamma$ satisfy the inequalities $\left| \rho  / (\lambda_k-\gamma) \right|<1$.
Noting the sum of geometric series, the quantity in the parentheses in \eqref{eq:hat_mup} for $k = m+1, m+2, \dots, r$ is written as 
\begin{align}
	\frac{1}{N}\sum_{j=1}^{N}\rho^{p}\mathrm{e}^{\mathrm{i}p\theta_j}\frac{\rho \mathrm{e}^{\mathrm{i}\theta_j}}{\rho \mathrm{e}^{\mathrm{i}\theta_j}-(\lambda_k-\gamma)} 
	& = \frac{1}{N}\sum_{j=1}^{N}\rho^{p}\mathrm{e}^{\mathrm{i}p\theta_j}\left(-\frac{\frac{\rho \mathrm{e}^{\mathrm{i}\theta_j}}{\lambda_k-\gamma}}{1-\frac{\rho\mathrm{e}^{\mathrm{i}\theta_j}}{\lambda_k-\gamma}}\right) \notag \\
	& = \frac{1}{N}\sum_{j=1}^{N}\rho^{p}\mathrm{e}^{\mathrm{i}p\theta_j}\left(-\sum_{\ell=0}^{\infty}\left(\frac{\rho \mathrm{e}^{\mathrm{i}\theta_j}}{\lambda_k-\gamma}\right)^{\ell+1}\right)\nonumber\\
	& =\sum_{\ell=0}^\infty-\rho^{p+\ell+1}(\lambda_k-\gamma)^{-(\ell+1)}\left(\frac{1}{N}\sum_{j=1}^{N}\mathrm{e}^{\mathrm{i}(p+\ell+1)\theta_j}\right) \notag\\
	& = \sum_{s=1}^{\infty}-\rho^{sN}(\lambda_k-\gamma)^{-(sN-p)}\nonumber\\
	& = (\lambda_k-\gamma)^p\left(\frac{-\left(\frac{\rho}{\lambda_k-\gamma}\right)^N}{1-\left(\frac{\rho}{\lambda_k-\gamma}\right)^N}\right). \label{eq:mu_out}
\end{align}
Here, we set $p+\ell+1 = sN$ ($s = 1, 2, \dots$).
It follows from \eqref{eq:hat_mup}, \eqref{eq:mu_in}, and \eqref{eq:mu_out} that the approximated complex moment is split into two parts $\mathsf{M}_{p}^{(N)} = \mathsf{M}_{p,\mathrm{in}}^{(N)} + \mathsf{M}_{p,\mathrm{out}}^{(N)}$, where
\begin{equation}
	\mathsf{M}_{p,\mathrm{in}}^{(N)} = \sum_{k=1}^m  (\lambda_k-\gamma)^p\left(\frac{1}{1-\left(\frac{\lambda_k-\gamma}{\rho}\right)^N}\right) \mathcal{V}_k, \quad 
	\mathsf{M}_{p,\mathrm{out}}^{(N)} = \sum_{k=m+1}^r (\lambda_k-\gamma)^p\left(\frac{-\left(\frac{\rho}{\lambda_k-\gamma}\right)^N}{1-\left(\frac{\rho}{\lambda_k-\gamma}\right)^N}\right) \mathcal{V}_k
	\label{eq:mu_in/out}
\end{equation}
are regarding the inside and outside of $\Gamma$, respectively.
Together with \eqref{eq:mu_p}, we have the truncation error analysis of the $N$-point trapezoidal rule $\mathsf{M}_{p}^{(N)} - \mathsf{M}_p$.

\section{Error bound of the complex moment} \label{sec:bound}
Based on the error analysis in the previous section, we derive a rigorous error bound for each complex moment $\mathsf{M}_p$.
Let 
\begin{equation*}
\alpha_k = \frac{1}{1-\left(\frac{\lambda_k-\gamma}{\rho}\right)^N}, \quad k=1,2,\dots,m, \qquad	\beta_k = \frac{-\left(\frac{\rho}{\lambda_k-\gamma}\right)^N}{1-\left(\frac{\rho}{\lambda_k-\gamma}\right)^N}, \quad k=m+1, m+2, \dots, r.
\end{equation*}
Then, the rightmost sides of \eqref{eq:mu_in} and \eqref{eq:mu_out} become $(\lambda_k-\gamma)^p\alpha_k$ ($k=1, 2, \dots, m$) and $(\lambda_k-\gamma)^p\beta_k$ ($k=m+1,\dots,n$), respectively.
Then, we simplify the expressions of the approximated complex moment \eqref{eq:mu_in/out}
\begin{equation*}
\mathsf{M}_{p}^{(N)}  = \mathsf{M}_{p,\mathrm{in}}^{(N)}+\mathsf{M}_{p,\mathrm{out}}^{(N)} = \sum_{k=1}^m (\lambda_k-\gamma)^p\alpha_k \mathcal{V}_k + \sum_{k=m+1}^r (\lambda_k-\gamma)^p\beta_k \mathcal{V}_k.
\end{equation*}
The truncation error is given by
\begin{equation*}
\mathsf{M}_p-\mathsf{M}_{p}^{(N)}  = \sum_{k=1}^m  (\lambda_k-\gamma)^p(1-\alpha_k) \mathcal{V}_k - \sum_{k=m+1}^r (\lambda_k-\gamma)^p \beta_k \mathcal{V}_k.
\end{equation*}

We note that the following identities of the eigenvalues of a Hankel matrix pencil are useful for our verification methods.
\begin{lemma}
	Assume that $\mathrm{rank}(H_M) = m$ holds.
	Then, the Hankel matrix pencil $z H_M - H_M^<$ consisting of $\mathsf{M}_p$ and the Hankel matrix pencil $z H_{M,\mathrm{in}}^{(N)} - H_{M,\mathrm{in}}^{<, (N)}$ with
	\begin{align*}
	H_{M,\mathrm{in}}^{<, (N)} =
	\begin{bmatrix}
	\mathsf{M}_{1,{\mathrm{in}}}^{(N)} & \mathsf{M}_{2,{\mathrm{in}}}^{(N)} & \cdots & \mathsf{M}_{M,{\mathrm{in}}}^{(N)} \\
	\mathsf{M}_{2,{\mathrm{in}}}^{(N)} & \mathsf{M}_{3,{\mathrm{in}}}^{(N)} & \cdots & \mathsf{M}_{M+1,{\mathrm{in}}}^{(N)}\\
	\vdots  & \vdots    & \ddots & \vdots    \\
	\mathsf{M}_{M,{\mathrm{in}}}^{(N)} & \mathsf{M}_{M+1,{\mathrm{in}}}^{(N)}& \cdots & \mathsf{M}_{2M-1,{\mathrm{in}}}^{(N)}
	\end{bmatrix}
	\in \mathbb{C}^{LM \times LM}, \\
	H_{M,\mathrm{in}}^{(N)} = 
	\begin{bmatrix}	
	\mathsf{M}_{0,{\mathrm{in}}}^{(N)} & \mathsf{M}_{1,{\mathrm{in}}}^{(N)} & \cdots & \mathsf{M}_{M-1,{\mathrm{in}}}^{(N)} \\
	\mathsf{M}_{1,{\mathrm{in}}}^{(N)} & \mathsf{M}_{2,{\mathrm{in}}}^{(N)} & \cdots & \mathsf{M}_{M,{\mathrm{in}}}^{(N)}   \\
	\vdots    & \vdots  & \ddots & \vdots     \\
	\mathsf{M}_{M-1,{\mathrm{in}}}^{(N)} & \mathsf{M}_{M,{\mathrm{in}}}^{(N)} & \cdots & \mathsf{M}_{2M-2,{\mathrm{in}}}^{(N)}
	\end{bmatrix}
	\in \mathbb{C}^{LM \times LM}
	\end{align*}
	consisting of $\mathsf{M}_{p,\mathrm{in}}^{(N)}$ have the same eigenvalues.
\end{lemma}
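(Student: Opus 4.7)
The plan is to exhibit a common root factor for all four block Hankel matrices and then reduce the eigenvalue comparison to an $m\times m$ pencil. Set $W = X_\Omega^\mathsf{H} B V$, so that by the identity $\mathsf{M}_p = W^\mathsf{H}\Lambda_\Omega^p W$ already derived in Section~\ref{sec:bSSHK}, together with the analogous expression $\mathsf{M}_{p,\mathrm{in}}^{(N)} = W^\mathsf{H} D_\alpha\Lambda_\Omega^p W$ (where $D_\alpha=\mathrm{diag}(\alpha_1,\ldots,\alpha_m)$ commutes with the real diagonal $\Lambda_\Omega$), a direct block computation should yield
\[
H_M = T^\mathsf{H} T, \quad H_M^< = T^\mathsf{H}\Lambda_\Omega T, \quad H_{M,\mathrm{in}}^{(N)} = T^\mathsf{H} D_\alpha T, \quad H_{M,\mathrm{in}}^{<,(N)} = T^\mathsf{H} D_\alpha\Lambda_\Omega T,
\]
where $T = [W,\Lambda_\Omega W,\ldots,\Lambda_\Omega^{M-1}W]\in\mathbb{C}^{m\times LM}$ is a Krylov-type factor common to both sides. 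The two pencils therefore compress to
\[
zH_M - H_M^< = T^\mathsf{H}(zI_m-\Lambda_\Omega)T, \qquad zH_{M,\mathrm{in}}^{(N)} - H_{M,\mathrm{in}}^{<,(N)} = T^\mathsf{H} D_\alpha(zI_m-\Lambda_\Omega)T.
\]

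I would next record two invertibility facts. The hypothesis $\mathrm{rank}(H_M)=m$ together with $H_M = T^\mathsf{H} T$ forces $T$ to have full row rank $m$; and since $|\lambda_k-\gamma|/\rho<1$ for $k=1,\ldots,m$, each $\alpha_k = 1/(1-((\lambda_k-\gamma)/\rho)^N)$ is finite and nonzero, so $D_\alpha$ is invertible. Using a thin SVD of $T$, I would apply a strict equivalence by unitary matrices of size $LM$ to block-diagonalize each pencil into a regular $m\times m$ block plus a zero block of size $(LM-m)$. The resulting regular parts have the form $C(zI_m-\Lambda_\Omega)C^\mathsf{H}$ and $CD_\alpha(zI_m-\Lambda_\Omega)C^\mathsf{H}$ for a common nonsingular $m\times m$ matrix $C$, whose determinants differ only by the nonzero scalar $\det D_\alpha$. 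Hence both determinants vanish exactly on $\{\lambda_1-\gamma,\ldots,\lambda_m-\gamma\}$, which gives the desired equality of finite eigenvalues.

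The main obstacle is conceptual rather than computational: both pencils are singular whenever $LM>m$, so the word ``eigenvalues'' in the statement must be interpreted as the finite eigenvalues of the regular part in the Kronecker canonical form (consistent with the usage in Theorem~\ref{th:Hankel}). Once that convention is fixed, the strict equivalence step legitimately peels off the singular Kronecker blocks as zero blocks on both sides, and the determinant comparison above finishes the argument without invoking any delicate spectral perturbation theory.
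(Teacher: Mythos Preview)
Your argument is correct. The factorizations $H_M = T^\mathsf{H} T$, $H_M^< = T^\mathsf{H}\Lambda_\Omega T$, $H_{M,\mathrm{in}}^{(N)} = T^\mathsf{H} D_\alpha T$, $H_{M,\mathrm{in}}^{<,(N)} = T^\mathsf{H} D_\alpha\Lambda_\Omega T$ are all valid (using that $\Lambda_\Omega$ is real diagonal, hence $\Lambda_\Omega^\mathsf{H}=\Lambda_\Omega$, and commutes with $D_\alpha$); the rank hypothesis forces $T$ to have full row rank $m$; each $\alpha_k>0$ since $|(\lambda_k-\gamma)/\rho|<1$ makes $1-((\lambda_k-\gamma)/\rho)^N>0$, so $D_\alpha$ is invertible; and the SVD-based strict equivalence cleanly isolates the $m\times m$ regular block from the zero block on both sides. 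Your remark that ``eigenvalues'' has to mean the finite eigenvalues of the regular part is exactly the convention the paper uses in Theorem~\ref{th:Hankel}.

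The paper takes a different, shorter route. Instead of factoring through $T$, it observes that each $\alpha_k>0$ can be written as $(\alpha_k^{1/2})^2$ and absorbed into the input matrix: replacing $V$ by a $V'$ whose expansion coefficients along $\boldsymbol{x}_k$ are scaled by $\alpha_k^{1/2}$ converts $\alpha_k\mathcal{V}_k$ into the $\mathcal{V}_k'$ built from $V'$, so that $\mathsf{M}_{p,\mathrm{in}}^{(N)}$ computed with $V$ equals the exact moment $\mathsf{M}_p$ computed with $V'$. Theorem~\ref{th:Hankel} applied with $V'$ then yields the same eigenvalues $\lambda_1-\gamma,\dots,\lambda_m-\gamma$. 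Your approach is self-contained and makes the singular-pencil structure explicit; the paper's approach is more economical but relies on Theorem~\ref{th:Hankel} and leaves the preservation of the rank condition for $V'$ (equivalently $\mathrm{rank}(H_{M,\mathrm{in}}^{(N)})=m$) implicit---a point your factorization settles directly via the invertibility of $D_\alpha$.
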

\begin{proof}
	Let $V=[\boldsymbol{v}_1,\boldsymbol{v}_2,\dots,\boldsymbol{v}_L]$, $\boldsymbol{v}_i=\sum_{j = 1}^nc_j\boldsymbol{x}_j$ and $V'=[\boldsymbol{v}'_1,\boldsymbol{v}'_2,\dots,\boldsymbol{v}'_L]$, $\boldsymbol{v}'_i=\sum_{j = 1}^n\alpha_j^{1/2}c_j\boldsymbol{x}_j$.
	Then, we have the equalities
	\begin{equation*}
	\alpha_k \mathcal{V}_k = \alpha_kV^\mathsf{H} B \boldsymbol{x}_k\boldsymbol{x}_k^\mathsf{H} B V = \alpha_k\left(c_k\boldsymbol{e}_k\right)\left(c_k\boldsymbol{e}_k\right)^\mathsf{H}=\left(\alpha_k^{1/2}c_k\boldsymbol{e}_k\right)\left(\alpha_k^{1/2}c_k\boldsymbol{e}_k\right)^\mathsf{H}={V'}^\mathsf{H} B \boldsymbol{x}_k\boldsymbol{x}_k^\mathsf{H} B V'
	\end{equation*}
	for $k = 1$, $2$, \dots, $m$.
	Since Theorem~\ref{th:Hankel} holds irrespective of the scaling regarding the eigenvectors in the columns of $V$, the lemma holds.
\end{proof}
Hence, we derive an enclosure of $\mathsf{M}_{p,\mathrm{in}}^{(N)}$ instead of an enclosure of $\mathsf{M}_p$.
We can enclose $\mathsf{M}_{p,\mathrm{in}}^{(N)}$ by using the quantity $|\mathsf{M}_{p,\mathrm{out}}^{(N)}|$ and computing the truncated complex moment $\mathsf{M}_{p}^{(N)}$ with interval arithmetic.
Let us denote a numerical approximation of $\mathsf{M}_{p}^{(N)}$ by $\tilde{\mathsf{M}}_{p}^{(N)}$.
Hereafter, we denote a numerically computed quantity that may suffer from rounding errors with a tilde.
Then, it follows from $\mathsf{M}_{p}^{(N)}-\mathsf{M}_{p,\mathrm{in}}^{(N)} =\mathsf{M}_{p,\mathrm{out}}^{(N)}$ that the inequality
\begin{align*}
\left|\mathsf{M}_{p,\mathrm{in}}^{(N)}-\tilde{\mathsf{M}}_{p}^{(N)}\right| \leq \left|\mathsf{M}_{p,\mathrm{in}}^{(N)}-\mathsf{M}_{p}^{(N)}\right|+ \left|\mathsf{M}_{p}^{(N)}-\tilde{\mathsf{M}}_{p}^{(N)}\right| = \left|\mathsf{M}_{p,\mathrm{out}}^{(N)}\right| + \left|\mathsf{M}_{p}^{(N)}-\tilde{\mathsf{M}}_{p}^{(N)}\right|
\end{align*}
holds.
Let us denote the interval matrix with radius $r\in \mathbb{R}_{+}^{L\times L}$ centered at $c \in \mathbb{C}^{L\times L}$ by $\langle c, r \rangle$.
To sum up the above discussion, we have the following theorem:
\begin{theorem}\label{thm:errbound}
	The computable rigorous enclosure of $\mathsf{M}_{p,\mathrm{in}}^{(N)}$ is given by
	\begin{align}\label{eq:rigorous_enclosure}
	\mathsf{M}_{p,\mathrm{in}}^{(N)} \in \left\langle\mathsf{M}_{p}^{(N)},\left|\mathsf{M}_{p,\mathrm{out}}^{(N)}\right|\right \rangle \subset \left \langle \tilde{\mathsf{M}}_{p}^{(N)},\left|\mathsf{M}_{p,\mathrm{out}}^{(N)}\right|+\left|\mathsf{M}_{p}^{(N)}-\tilde{\mathsf{M}}_{p}^{(N)}\right|\right\rangle.
	\end{align}
\end{theorem}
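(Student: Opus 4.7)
The statement is essentially a formalization of the preceding paragraph, so my plan is to proceed in two clean steps corresponding to the two inclusions in \eqref{eq:rigorous_enclosure}.

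First, I would establish the inner inclusion $\mathsf{M}_{p,\mathrm{in}}^{(N)} \in \langle \mathsf{M}_p^{(N)}, |\mathsf{M}_{p,\mathrm{out}}^{(N)}|\rangle$. By the splitting $\mathsf{M}_p^{(N)} = \mathsf{M}_{p,\mathrm{in}}^{(N)} + \mathsf{M}_{p,\mathrm{out}}^{(N)}$ established in the derivation of \eqref{eq:mu_in/out}, we have the identity $\mathsf{M}_{p,\mathrm{in}}^{(N)} - \mathsf{M}_p^{(N)} = -\mathsf{M}_{p,\mathrm{out}}^{(N)}$, hence entrywise $|\mathsf{M}_{p,\mathrm{in}}^{(N)} - \mathsf{M}_p^{(N)}| = |\mathsf{M}_{p,\mathrm{out}}^{(N)}|$. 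By the very definition of the interval-matrix notation $\langle c, r\rangle$, this means $\mathsf{M}_{p,\mathrm{in}}^{(N)}$ lies in (in fact on the boundary of) the interval box centered at $\mathsf{M}_p^{(N)}$ with radius $|\mathsf{M}_{p,\mathrm{out}}^{(N)}|$.

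Next, for the outer inclusion, I would use the triangle inequality componentwise. Writing
\[
\left|\mathsf{M}_{p,\mathrm{in}}^{(N)} - \tilde{\mathsf{M}}_p^{(N)}\right| \le \left|\mathsf{M}_{p,\mathrm{in}}^{(N)} - \mathsf{M}_p^{(N)}\right| + \left|\mathsf{M}_p^{(N)} - \tilde{\mathsf{M}}_p^{(N)}\right|
\]
and substituting the identity from the previous step yields the bound
\[
\left|\mathsf{M}_{p,\mathrm{in}}^{(N)} - \tilde{\mathsf{M}}_p^{(N)}\right| \le \left|\mathsf{M}_{p,\mathrm{out}}^{(N)}\right| + \left|\mathsf{M}_p^{(N)} - \tilde{\mathsf{M}}_p^{(N)}\right|,
\]
which is exactly the statement $\mathsf{M}_{p,\mathrm{in}}^{(N)} \in \langle \tilde{\mathsf{M}}_p^{(N)}, |\mathsf{M}_{p,\mathrm{out}}^{(N)}| + |\mathsf{M}_p^{(N)} - \tilde{\mathsf{M}}_p^{(N)}|\rangle$. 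To justify that the first box is contained in the second, one only needs to observe that shifting the center by a vector of magnitude at most $\delta := |\mathsf{M}_p^{(N)} - \tilde{\mathsf{M}}_p^{(N)}|$ and enlarging the radius by $\delta$ produces a superset.

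There is no serious obstacle; the only point that deserves care is that the theorem claims an \emph{enclosure}, so the quantities $|\mathsf{M}_{p,\mathrm{out}}^{(N)}|$ and $|\mathsf{M}_p^{(N)} - \tilde{\mathsf{M}}_p^{(N)}|$ appearing as radii must themselves be interpreted as (valid upper bounds for) the true values, which in practice will be computed by interval arithmetic. This is already implicit in the notation and does not affect the algebraic content of the proof. Thus the proof reduces to the two lines above: an identity from the inside/outside decomposition and a single application of the triangle inequality.
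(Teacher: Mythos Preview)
Your proof is correct and follows exactly the same approach as the paper: the identity $\mathsf{M}_{p}^{(N)}-\mathsf{M}_{p,\mathrm{in}}^{(N)}=\mathsf{M}_{p,\mathrm{out}}^{(N)}$ from the in/out decomposition, followed by one application of the entrywise triangle inequality. The paper in fact just says ``the proof is already completed by the above discussions,'' which are precisely the two steps you wrote out.
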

The proof is already completed by the above discussions.
We can enclose $\left|\mathsf{M}_{p}^{(N)}-\tilde{\mathsf{M}}_{p}^{(N)}\right|$ using standard verification methods using interval arithmetic, whereas the complex moment $\mathsf{M}_{p, \mathrm{out}}^{(N)}$ regarding the outside of $\Gamma$ is bounded as follows:
\begin{theorem}\label{th:outer_mu}
	Let $V \in \mathbb{C}^{n \times L}$ be an arbitrary matrix such that
	\begin{equation*}
	V=XC=\left[ X_0, X_1 \right]
	\begin{bmatrix}
	C_0 \\
	C_1
	\end{bmatrix},
	\end{equation*}
	where the columns of $X_0 = \left[\boldsymbol{x}_{r+1},\boldsymbol{x}_{r+2},\dots,\boldsymbol{x}_n\right]$ form a basis of $\Ker\,B$ ($r=\mathrm{rank}(B)$), the columns of $X_1=\left[\boldsymbol{x}_1,\boldsymbol{x}_2,\dots,\boldsymbol{x}_r\right]$ form a basis of $\Ker \, B^\perp$, $C_0 \in \mathbb{C}^{(n-r) \times L}$, and $C_1 \in \mathbb{C}^{r \times L}$ has at least one nonzero entry in each column and each row.
	Suppose $N > 2M-1 \ge p$ and that $\hat{\lambda}$ satisfies $\left|\hat{\lambda}-\gamma\right|=\min_{k=m+1,m+2,\dots,r}\left|\lambda_k-\gamma\right|$.
	Then, the complex moment \eqref{eq:mu_in/out} is bounded above as 
	\begin{align}
	\left|\mathsf{M}_{p,\mathrm{out}}^{(N)}\right| \le (r-m)\left|\hat{\lambda}-\gamma\right|^{p}\left(\frac{\left(\frac{\rho}{\left|\hat{\lambda}-\gamma\right|}\right)^N}{1-\left(\frac{\rho}{\left|\hat{\lambda}-\gamma\right|}\right)^N}\right)\left\|V^{\mathsf{H}}BV\right\|_{\mathsf{F}}
	\label{eq:outer_mu}
	\end{align}
	for $p = 0$, $1$, \dots, $2M-1$, where $\| \cdot \|_{\mathsf{F}}$ denotes the Frobenius norm.
\end{theorem}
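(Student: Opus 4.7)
The plan is to bound $|\mathsf{M}_{p,\mathrm{out}}^{(N)}|$ entrywise through the triangle inequality, separating in each of the $r-m$ summands a scalar coefficient from the matrix factor $\mathcal{V}_k$, and then bounding each factor uniformly in $k$.

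Starting from $\mathsf{M}_{p,\mathrm{out}}^{(N)}=\sum_{k=m+1}^r(\lambda_k-\gamma)^p\beta_k\mathcal{V}_k$, the entrywise triangle inequality yields
\begin{equation*}
|\mathsf{M}_{p,\mathrm{out}}^{(N)}|\le\sum_{k=m+1}^r|(\lambda_k-\gamma)^p\beta_k|\,|\mathcal{V}_k|.
\end{equation*}
For the scalar coefficients I would set $t_k=|\lambda_k-\gamma|>\rho$ and $\eta_k=\rho/(\lambda_k-\gamma)\in\mathbb{R}$; since $|\eta_k|<1$, the elementary inequality $|1-\eta_k^N|\ge1-|\eta_k|^N$ produces $|(\lambda_k-\gamma)^p\beta_k|\le\rho^N t_k^p/(t_k^N-\rho^N)=:f(t_k)$. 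A direct computation gives $f'(t)=\rho^N t^{p-1}\bigl[(p-N)t^N-p\rho^N\bigr]/(t^N-\rho^N)^2$, which is strictly negative on $(\rho,\infty)$ by the hypothesis $p\le 2M-1<N$. Hence $f$ is decreasing, so $f(t_k)\le f(|\hat{\lambda}-\gamma|)$ for every $k\ge m+1$, producing exactly the scalar prefactor in~\eqref{eq:outer_mu}.

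For $|\mathcal{V}_k|$ I would exploit the Weierstrass decomposition recalled before the theorem. Because $BX_0=0$, $X_1^\mathsf{H} BX_1=I_r$, and $V=X_0C_0+X_1C_1$, we have $V^\mathsf{H} BV=C_1^\mathsf{H} C_1$ and, for $k\le r$, $\mathcal{V}_k=\boldsymbol{g}_k\boldsymbol{g}_k^\mathsf{H}$ with $\boldsymbol{g}_k=C_1^\mathsf{H}\boldsymbol{e}_k\in\mathbb{C}^L$, so $\|\mathcal{V}_k\|_\mathsf{F}=\|\boldsymbol{g}_k\|_2^2=(C_1C_1^\mathsf{H})_{kk}$. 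The trace identity $\mathrm{tr}((C_1C_1^\mathsf{H})^2)=\mathrm{tr}((C_1^\mathsf{H} C_1)^2)$ gives $\|C_1C_1^\mathsf{H}\|_\mathsf{F}=\|V^\mathsf{H} BV\|_\mathsf{F}$, and the entry-versus-Frobenius inequality then yields
\begin{equation*}
|(\mathcal{V}_k)_{ij}|\le\|\mathcal{V}_k\|_\mathsf{F}=(C_1C_1^\mathsf{H})_{kk}\le\|C_1C_1^\mathsf{H}\|_\mathsf{F}=\|V^\mathsf{H} BV\|_\mathsf{F}.
\end{equation*}
Substituting this and the scalar bound above into the sum and counting $r-m$ summands gives~\eqref{eq:outer_mu}.

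The main obstacle is the reduction $|\mathcal{V}_k|\le\|V^\mathsf{H} BV\|_\mathsf{F}$: one must pass through the rank-one structure made visible by the Weierstrass decomposition and combine the trace identity with the fact that a single entry of a matrix is dominated by its Frobenius norm. The other ingredients---the triangle inequality, the elementary bound on $|1-\eta_k^N|$, and the one-variable monotonicity check using $p<N$---are routine.
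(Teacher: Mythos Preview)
Your proof is correct and follows essentially the same approach as the paper. The only notable difference is in the scalar bound: the paper expands $\beta_k$ as the geometric series $-\sum_{s\ge1}(\rho/(\lambda_k-\gamma))^{sN}$ and bounds each term $\rho^{sN}|\lambda_k-\gamma|^{-(sN-p)}$ termwise via $|\lambda_k-\gamma|\ge|\hat{\lambda}-\gamma|$ (which is where $sN>p$ enters), rather than your direct monotonicity check $f'(t)<0$; both devices are equally valid, and your treatment of $|\mathcal{V}_k|\le\|V^\mathsf{H}BV\|_\mathsf{F}$ via the trace identity and the entry-versus-Frobenius inequality makes explicit the step the paper records simply as $\|C_1^\mathsf{H}\boldsymbol{e}_k\boldsymbol{e}_k^\mathsf{T}C_1\|_\mathsf{F}\le\|C_1^\mathsf{H}C_1\|_\mathsf{F}$.
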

\begin{proof}
	Regarding the fraction factor in \eqref{eq:mu_in/out} as the geometric series, we have
	\begin{align*}
	\left|\mathsf{M}_{p,\mathrm{out}}^{(N)}\right|&=\left|\sum_{k=m+1}^r (\lambda_k-\gamma)^p\left(\sum_{s=1}^\infty\left(\frac{\rho}{\lambda_k-\gamma}\right)^{sN}\right) \mathcal{V}_k \right| \le\sum_{k=m+1}^r\left(\sum_{s=1}^\infty\rho^{sN}\left|\lambda_k-\gamma\right|^{-(sN-p)}\right) \left| \mathcal{V}_k \right| \\
	&\le\sum_{k=m+1}^r \left(\sum_{s=1}^\infty\rho^{sN}\left|\hat{\lambda}-\gamma\right|^{-(sN-p)}\right) \left| \mathcal{V}_k \right| = \sum_{k=m+1}^r \left|\hat{\lambda}-\gamma\right|^{p}\left(\frac{\left(\frac{\rho}{\left|\hat{\lambda}-\gamma\right|}\right)^N}{1-\left(\frac{\rho}{\left|\hat{\lambda}-\gamma\right|}\right)^N}\right) \left| \mathcal{V}_k \right|.
	\end{align*}
	Note that the property $B V = B X_1 C_1 + B X_0 C_0 = B X_1 C_1$ gives
	\begin{equation*}
	V^{\mathsf{H}} B V = V^{\mathsf{H}} B X_1 C_1 = (BV)^{\mathsf{H}} X_1 C_1=(B X_1 C_1)^{\mathsf{H}} X_1 C_1 = C_1^{\mathsf{H}} X_1^{\mathsf{H}} B X_1 C_1 = C_1^{\mathsf{H}} C_1.
	\end{equation*}
	Hence, we have 
	\begin{align*}
	\left|\mathcal{V}_k\right| & \le \left\| V^\mathsf{H} B \boldsymbol{x}_k \boldsymbol{x}_k^\mathsf{H} B V\right\|_{\mathsf{F}} = 
	\left\|C_1^{\mathsf{H}}X_1^{\mathsf{H}}B\boldsymbol{x}_k \boldsymbol{x}_k^\mathsf{H} B X_1 C_1\right\|_{\mathsf{F}} =\left\|C_1^{\mathsf{H}}\boldsymbol{e}_k \boldsymbol{e}_k^\mathsf{T} C_1\right\|_{\mathsf{F}} \le \left\| C_1^{\mathsf{H}}C_1\right\|_{\mathsf{F}}\\
	& =\left\|V^{\mathsf{H}}BV\right\|_{\mathsf{F}}, \quad k=1,2,\dots,r,
	\end{align*}
	where $\boldsymbol{e}_k$ is the $k$\,th standard basis vector of $\mathbb{R}^n$, i.e., the $k$\,th entry is one and the remaining entries are zero.
	Therefore, we obtain \eqref{eq:outer_mu}.
\end{proof}

\section{Implementation}\label{sec:implementation}
In this section, we present an implementation of the block Sakurai--Sugiura Hankel-based method for numerically verifying the partial eigenvalues $\lambda_i \in \Omega$, $i = 1$, $2$, \dots, $m$.
Suppose that the number of the eigenvalues  in $\Gamma$ is $m$.
We set $L$ and $M$ such that $m = L M$.
{Note that if $m$ is a prime number, either $L$ or $M$ must be one and the other must be $m$.}
To rigorously enclose the eigenvalues, we verify each block $\mathsf{M}_{p,\mathrm{in}}^{(N)}$ of the block Hankel matrices by using Theorem~\ref{thm:errbound},  and then apply the verified
eigenvalue computation methods \cite{Miyajima2012JCAM,Rump1989} to the small eigenproblem of regular Hankel matrix pencil consisting of $\mathsf{M}_{p,\mathrm{in}}^{(N)}$.
The matrix $\left|\mathsf{M}_{p,\mathrm{out}}^{(N)}\right|$ in \eqref{eq:rigorous_enclosure} can be bounded by using \eqref{eq:outer_mu}.
The number of quadrature points can be automatically determined from the error bound \eqref{eq:outer_mu} by
\begin{equation}\label{eq:setN}
N \ge \frac{\log\left(\frac{\delta}{c+\delta}\right)}{\log \frac{\rho}{\left|\hat{\lambda}-\gamma\right|}},\quad c=(r-m) \left\|V^{\mathsf{H}}BV\right\|_{\mathsf{F}} \max_{p=1,2, \dots,2M-1}\left|\hat{\lambda}-\gamma\right|^{p},
\end{equation}
where $\delta$ denotes the tolerance of  quadrature error.
Hence, there is a trade-off between the accuracy for the quadrature and the central processing unit (CPU) time.

\par
The matrix $\left|\mathsf{M}_{p}^{(N)}-\tilde{\mathsf{M}}_{p}^{(N)}\right|$ in \eqref{eq:rigorous_enclosure} can be also bounded by evaluating the numerical error.
To rigorously bound the numerical error, we need verification of a numerical solution of the linear system with multiple right-hand sides, that is $(z_j B - A) Y_j = B V$, which comes from
\begin{equation*}
\mathsf{M}_p^{(N)} 
=\frac{1}{N} \sum_{j=1}^N V^\mathsf{H} B (\rho \mathrm{e}^{\mathrm{i} \theta_j})^{p+1} Y_j^*,\quad Y_j^* =(z_j B - A)^{-1} B V.
\end{equation*}
The enclosure of $Y_j^*$ can be obtained by standard verification methods, e.g., \cite{Rump2013JCAM}, whereas we consider efficiently enclosing the solution $Y_j^*$ for positive definite $B$.

\begin{theorem}\label{thm:efficientVNC}
	Let $A$ be a Hermitian matrix and $B$ a Hermitian positive definite matrix.
	The quadrature points $z_j$, $j = 1$, $2$, \dots, $N$ are defined as in \eqref{eq:quadpoint}.
	Denote the $i$\,th entries of the solution $\boldsymbol{y}^\ast = (z_j B - A)^{-1} \boldsymbol{b}$ and an approximate solution $\tilde{\boldsymbol{y}}$ of $(z_j B - A) \boldsymbol{y} = \boldsymbol{b}$ by $\tilde{{y}}_i$ and $y_i^\ast$, respectively.	
	If we denote the residual by $\tilde{\boldsymbol{r}} = \boldsymbol{b} - (z_j B -A) \tilde{\boldsymbol{y}}$, then the error $\tilde{\boldsymbol{y}} - \boldsymbol{y}^\ast$ satisfies
	\begin{equation}
	| \tilde{{y}}_i - {y}_i^\ast | \leq |\operatorname{Im} z_j|^{-1} \lambda_\mathrm{min} (B)^{-1} \| \tilde{\boldsymbol{r}} \|_2
	\label{eq:errorbound_linsys}
	\end{equation}
	for all $i = 1$, $2$, \dots, $n$, where $\lambda_{\min} (\cdot)$ is the smallest eigenvalue of a matrix and $\| \cdot \|_2$ denotes the Euclidean norm.
\end{theorem}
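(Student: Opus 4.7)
The plan is to reduce the componentwise error bound to a single spectral-norm estimate for $(z_j B - A)^{-1}$, and then obtain that estimate by exploiting the Hermiticity of $A$ and $B$ together with the non-reality of the quadrature point $z_j$. From $\tilde{\boldsymbol{r}} = \boldsymbol{b} - (z_j B - A)\tilde{\boldsymbol{y}} = (z_j B - A)(\boldsymbol{y}^{\ast} - \tilde{\boldsymbol{y}})$ we get $\tilde{\boldsymbol{y}} - \boldsymbol{y}^{\ast} = -(z_j B - A)^{-1}\tilde{\boldsymbol{r}}$, so the chain of elementary inequalities
\[
|\tilde{y}_i - y_i^{\ast}| \le \|\tilde{\boldsymbol{y}} - \boldsymbol{y}^{\ast}\|_{\infty} \le \|\tilde{\boldsymbol{y}} - \boldsymbol{y}^{\ast}\|_2 \le \|(z_j B - A)^{-1}\|_2 \, \|\tilde{\boldsymbol{r}}\|_2
\]
reduces \eqref{eq:errorbound_linsys} to showing the singular-value bound $\sigma_\mathrm{min}(z_j B - A) \ge |\operatorname{Im} z_j|\,\lambda_\mathrm{min}(B)$.

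The key step is to prove this lower bound. I would take an arbitrary unit vector $\boldsymbol{x} \in \mathbb{C}^n$ and examine the Rayleigh-type scalar
\[
\boldsymbol{x}^{\mathsf{H}}(z_j B - A)\boldsymbol{x} = z_j\,\boldsymbol{x}^{\mathsf{H}} B \boldsymbol{x} - \boldsymbol{x}^{\mathsf{H}} A \boldsymbol{x}.
\]
Since $A$ and $B$ are Hermitian, both $\boldsymbol{x}^{\mathsf{H}} A \boldsymbol{x}$ and $\boldsymbol{x}^{\mathsf{H}} B \boldsymbol{x}$ are real, and the whole imaginary part of this scalar equals $\operatorname{Im}(z_j)\,\boldsymbol{x}^{\mathsf{H}} B \boldsymbol{x}$. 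Positive definiteness of $B$ gives $\boldsymbol{x}^{\mathsf{H}} B \boldsymbol{x} \ge \lambda_\mathrm{min}(B)$, so
\[
|\boldsymbol{x}^{\mathsf{H}}(z_j B - A)\boldsymbol{x}| \ge |\operatorname{Im}(z_j)|\,\lambda_\mathrm{min}(B).
\]
Cauchy--Schwarz bounds the left-hand side above by $\|(z_j B - A)\boldsymbol{x}\|_2$, and taking the infimum over unit $\boldsymbol{x}$ yields $\sigma_\mathrm{min}(z_j B - A) \ge |\operatorname{Im}(z_j)|\,\lambda_\mathrm{min}(B)$, which in combination with the first paragraph gives \eqref{eq:errorbound_linsys}.

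I do not anticipate a serious obstacle. The argument is essentially the classical ``imaginary shift keeps one away from the Hermitian spectrum'' trick, and the only conceptual point is recognising that bounding $\sigma_\mathrm{min}(z_j B - A)$ through the Rayleigh-type form $\boldsymbol{x}^{\mathsf{H}}(z_j B - A)\boldsymbol{x}$ provides a sharp, structure-aware replacement for generic residual bounds that would require an explicit numerical inverse of $z_j B - A$. One item worth flagging in the written proof is the implicit hypothesis $\operatorname{Im} z_j \neq 0$, which holds at every quadrature point \eqref{eq:quadpoint} as soon as $N$ is chosen so that no $\theta_j$ is an integer multiple of $\pi$ (in particular, for $N$ even).
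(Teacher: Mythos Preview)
Your argument is correct and yields exactly the bound \eqref{eq:errorbound_linsys}, but the route differs from the paper's. The paper factors through the Hermitian square root of $B$: writing
\[
(z_j B - A)^{-1} = B^{-1/2}\bigl(z_j \mathrm{I} - B^{-1/2} A B^{-1/2}\bigr)^{-1} B^{-1/2},
\]
it bounds $\|(z_j \mathrm{I} - B^{-1/2} A B^{-1/2})^{-1}\|_2$ by $|\operatorname{Im} z_j|^{-1}$ (since the spectrum of the Hermitian matrix $B^{-1/2} A B^{-1/2}$ is real, the distance from $z_j$ to it is at least $|\operatorname{Im} z_j|$) and absorbs the two factors of $\|B^{-1/2}\|_2$ into $\lambda_{\min}(B)^{-1}$. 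Your approach instead bounds $\sigma_{\min}(z_j B - A)$ directly via the imaginary part of the quadratic form $\boldsymbol{x}^{\mathsf{H}}(z_j B - A)\boldsymbol{x}$ and Cauchy--Schwarz. Both are short and standard; your version is marginally more elementary in that it avoids introducing $B^{1/2}$, while the paper's version makes the geometric picture (distance from $z_j$ to the real spectrum of the pencil) more explicit. Your remark that $\operatorname{Im} z_j \neq 0$ requires the quadrature node not to land on the real axis is a valid caveat: with $\theta_j = (2j-1)\pi/N$ this is automatic for even $N$, and the paper tacitly assumes it.
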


\begin{proof}
	Denote the square root of $B$ by $B^{1/2}$.
	Then, for all $i = 1$, $2$, \dots, $n$ we have
	\begin{align*}
	|\tilde{y}_i - y_i^* | & \leq \| \tilde{\boldsymbol{y}} - \boldsymbol{y}^\ast \|_2 \leq \| (z_j B - A)^{-1} \|_2 \| \tilde{\boldsymbol{r}} \|_2 = \| B^{-1/2} (z_j \mathrm{I} - B^{-1/2} A B^{-1/2})^{-1} B^{-1/2} \|_2 \| \tilde{\boldsymbol{r}} \|_2 \\ & \leq \| (z_j \mathrm{I} - B^{-1/2} A B^{-1/2})^{-1} \|_2 {\| B^{-1/2} \|_2}^2 \| \tilde{\boldsymbol{r}} \|_2 \leq |\operatorname{Im} z_j|^{-1} \lambda_\mathrm{min} (B)^{-1} \| \tilde{\boldsymbol{r}} \|_2.
	\end{align*}
	The bound $\| (z_j \mathrm{I} - B^{-1/2} A B^{-1/2})^{-1} \|_2 \leq (\operatorname{Im} z_j	)^{-1}$ can be geometrically interpreted as in Figure~\ref{fig:geo_bound}.
	Namely, the distance from the quadrature point $z_j$ to the nearest eigenvalue of $B^{-1/2} A B^{-1/2}$ is bounded below by the absolute value of the imaginary part of $z_j$.
	
	\begin{figure}[t]
		\centering
		\includegraphics[width=0.27\textwidth]{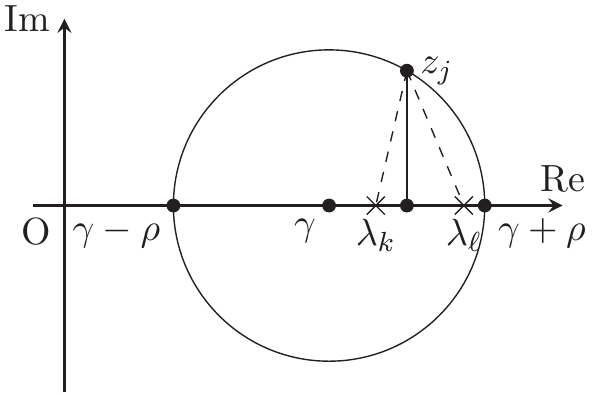}
		\caption{Geometric illustration for the bound $\| (z_j \mathrm{I} - B^{-1/2} A B^{-1/2})^{-1} \|_2 \leq (\operatorname{Im} z_j	)^{-1}$ in the complex plane.}
		\label{fig:geo_bound}
	\end{figure}
\end{proof}

Note that $z_j B - A$ is nonsingular for $j=1$, $2$, \dots, $N$, since $z_j$ is not in the real axis \eqref{eq:quadpoint}.
Hence, we do not need to verify the regularity of the coefficient matrix $z_j B - A$ such as in \cite{Rump2013JCAM}.
In addition, the bound \eqref{eq:errorbound_linsys} can be efficiently evaluated for sparse $A$ and $B$.
On the other hand, the bound~\eqref{eq:errorbound_linsys} shows that, if $\lambda_\mathrm{min}(B)$ is very small, the verification of $\tilde{\boldsymbol{y}}_j$ will be loose and the subsequent verification may fail.
This indicates that Theorem \ref{thm:efficientVNC} works well for well-conditioned $B$.
For ill-conditioned $B$, applying iterative refinements with multi-precision arithmetics \cite{OishiOgitaRump2009} to the linear system will potentially remedy the bound \eqref{eq:errorbound_linsys}.
Furthermore, if each entry of $z_j B -A$ and $\boldsymbol{b}$ is not wide interval, one can use a \emph{staggered correction} \cite[Section 4.3]{Ogita2009}.
That is,
\[
| \tilde{{y}}_i - {y}_i^\ast | \le |\tilde{d}_i| + |\operatorname{Im} z_j|^{-1} \lambda_\mathrm{min} (B)^{-1} \|\boldsymbol{b} - (z_j B -A) (\tilde{\boldsymbol{y}}+\tilde{\boldsymbol{d}})\|_2,
\]
where $\tilde{\boldsymbol{d}}$ solves $(z_j B -A)\tilde{\boldsymbol{d}}\approx\tilde{\boldsymbol{r}}$ in a numerical (non-rigorous) sense and $\tilde{d}_i$ denotes the $i$\,th entry of $\tilde{\boldsymbol{d}}$.
This technique is expected to give sharper error bounds than \eqref{eq:errorbound_linsys} in Theorem \ref{thm:efficientVNC}.
\par
We summarize the above procedures in Algorithm~\ref{alg:method}.
In this implementation, we scale the target interval $\Omega$ into $[-1,1]$ by $A'=\frac{1}{\rho}\left(A-\gamma B\right)$ and compute the eigenvalues of $A'\boldsymbol{x}=\lambda'B\boldsymbol{x}$ for simplicity.
Here, we denote interval quantities with squares brackets.

The verification in line~4 of Algorithm~\ref{alg:method} can be done by, e.g., the following steps:
\begin{enumerate}
	\item Compute a numerical approximation $\tilde{\lambda}$ of $\hat{\lambda}$ (defined in Section \ref{sec:bound}) using MATLAB function~\texttt{eigs}.
	\item Set $c\in (0,1)$ such that $1<c|\tilde{\lambda}|$.
	\item Verify regularity of the interval matrix $[A]-[1,c|\tilde{\lambda}|]B$ by using INTLAB function \break \texttt{isregular}.
	\item Adopt $c|\tilde{\lambda}|$ as the lower bound of $| \hat{\lambda} |$.
\end{enumerate}

\begin{algorithm}
	\caption{Proposed method.}
	\label{alg:method}
	\begin{algorithmic}[1]
		\REQUIRE $A \in \mathbb{C}^{n \times n}$, $B \in \mathbb{C}^{n \times n}$, $L$, $M \in \mathbb{N}_+$ such that $m = L M$, $V \in \mathbb{C}^{n \times L}$, $\gamma$, $\rho \in \mathbb{R}$, and $\delta>0$.
		\ENSURE $[\lambda_i ]$, $i = 1$, $2$, $\dots$, $m$
		\STATE Scale $[A]=\left[\frac{1}{\rho}\left(A-\gamma B\right)\right]$.
		\STATE Set $N$ by \eqref{eq:setN}.
		\STATE Compute $[z_j] = [\mathrm{e}^{\mathrm{i} [\theta_j]}]$ with $[\theta_j] = [2 \pi / N (j - 1/2)]$ for $j = 1$, $2$, \dots, $N$.
		\STATE Rigorously compute a lower bound of $\left|\hat{\lambda}\right|=\min_{k=m+1,m+2, \dots,r}\left|\lambda_k\right|$.
		\STATE Compute $[|\mathsf{M}_{p, \mathrm{out}^{(N)}}|]$ with \eqref{eq:outer_mu} for $p = 0$, $1$, \dots, $2M-1$.
		\STATE Compute $[Y_j]$ for $j = 1$, $2$, \dots, $N$, by using \eqref{eq:errorbound_linsys} if $B$ is positive definite.
		\STATE Compute $[M_{p, \mathrm{in}}^{(N)}]$ by using \eqref{eq:rigorous_enclosure} for $j = 1$, $2$, \dots, $N$.
		\STATE Form $[H_M^{<, \mathrm{in}}]$ and $[H_M^\mathrm{in}]$.
		\STATE Rigorously compute the eigenvalues and the corresponding eigenvectors of the generalized Hankel eigenproblem $[ H_M^{<, \mathrm{in}} ] \boldsymbol{y} = \lambda' [H_M^\mathrm{in}] \boldsymbol{y}$.
		\STATE Rescale the eigenvalues $[\lambda_i] = [\rho \lambda'_i + \gamma]$ for $i=1$, $2$, $\dots$, $m$.
	\end{algorithmic}
\end{algorithm}

\section{Numerical examples}\label{sec:examples}
To illustrate effectiveness of the proposed method, we show three numerical examples (two artificially generated eigenproblems and one practical eigenproblem).
In first and third examples, we compared the proposed method with INTLAB's function~\texttt{verifyeig} in terms of the CPU time.
The second example was set for illustrating the performance of the proposed method under the case that the matrix $B$ is positive semidefinite or ill-conditioned.
All computations were carried out on Ubuntu 16.04, Intel(R) Xeon(R) Gold 6128 CPU @ 3.40~gigahertz (GHz) with 12 cores, 256~gigabytes (GB) random-access memory (RAM).
All programs were coded and run in MATLAB R2018a for double precision floating operation arithmetic with unit roundoff $2^{-53} \simeq 1.1 \cdot 10^{-16}$ and with INTLAB version~10.2~\cite{Rump1999}.
The matrix $V \in \mathbb{R}^{n \times L}$ was generated by using built-in MATLAB function~\texttt{randn}.
The tolerance of quadrature error was $\delta=10^{-15}$.
We determined the smallest $N$ that satisfies \eqref{eq:setN}.
Note again that the number of eigenvalues in the interval is given in advance.

In this example, numerically computed solutions of linear systems $(z_j B - A) Y_j = BV$ were obtained by using MATLAB function~\texttt{mldivide}.
The eigenvalues of $H_M^{<, \text{in}} \boldsymbol{y} = \lambda' H_M^{\text{in}}$  in line~9 of Algorithm~\ref{alg:method} were verified by using INTLAB function~\texttt{verifyeig}.

\paragraph{Artificially generated eigenproblems 1}
The test matrix pencil $z B - A$ used was given by 
\begin{align}
A = \mathrm{tridiag} (-1, 2, -1) \in \mathbb{R}^{n \times n}, \quad B = \diag (b_1, b_2, \dots, b_n) \in \mathbb{R}^{n \times n}, 
\label{eq:ABms}
\end{align}
where $\mathrm{tridiag}(\cdot, \cdot, \cdot)$ denotes the tridiagonal Toeplitz matrix consisting of a triplet and the value of $b_i$ normally distributes with mean $1$ and variance $10^{-7}$.
The generalized eigenproblem of matrix pencil \eqref{eq:ABms} models harmonic oscillators consisting of mass points and springs.
In particular, the matrix pencil \eqref{eq:ABms} arises from an equation of motion of mass points in one dimension.
Let $u_i (t)$ be the displacement of the $i$\,th point from the equilibrium of spring $i$ at time $t$ with mass $b_i$ and connected with two springs with stiffnesses $k_i = k_{i+1} = 1$.
Then, we have the equation $i = 1$, $2$, \dots, $n$
\begin{equation*}
b_i \frac{\mathrm{d}^2 u_i(t)}{\mathrm{d} t^2} = k_{i+1} (u_{i+1}(t) - u_i(t)) - k_i (u_i(t) - u_{i-1} (t)) = u_{i+1} - 2 u_i(t) + u_{i-1}(t).
\end{equation*}
Suppose that the mass point has a simple harmonic oscillation $u_i (t) = x_i \sin (w t + \phi)$, where $w$ is the angular rate, $\phi$ is the phase, and the homogeneous Dirichlet boundary condition $u_0 (t) = u_{n+1} (t) = 0$ is imposed.
Then, we have the eigenproblem $A \boldsymbol{x} = \omega^2 B \boldsymbol{x}$, where $\boldsymbol{x} = [-x_1, -x_2, \dots, -x_n]^\mathsf{T}$.
\par
The verification targets were four eigenvalues near $2$ for $n=2^\ell$, $\ell = 5$, $6$, \dots, $20$ of matrix pencil \eqref{eq:ABms}.
We set the parameters $L=2$ and $M=2$.
It is well-known that the eigenvalue of $A$ is given by $\lambda_i(A)=2-2\cos(i\pi/(n+1))$ for $i=1,2,\dots,n$.
Perturbation theory of Hermitian generalized eigenproblems \cite[Theorem~8.3]{Nakatsukasa:2011:APT:2395562} gives the following bound between $\lambda_i$ and $\lambda_i(A)$:
\begin{equation}\label{eq:perturbation}
|\lambda_i(A)-\lambda_i| \le |\lambda_i(A)| \|\Delta B\|_2 \|B^{-1}\|_2,
\end{equation}
where $\Delta B=I-B$.
Then, we derived the lower bound of $|\hat{\lambda}|$ using the eigenvalue $\lambda_i(A)$ with its bound \eqref{eq:perturbation}.

Figure~\ref{fig:fig1} shows the CPU time of the proposed method (Algorithm~\ref{alg:method}) and a standard method for verifying specific eigenvalues in MATLAB (build-in MATLAB function~\texttt{eigs} for the solution of the eigenproblem and INTLAB function~\texttt{verifyeig} for eigenvalue verification).
\begin{figure}[t]
	\centering
	\includegraphics[width=0.8\textwidth]{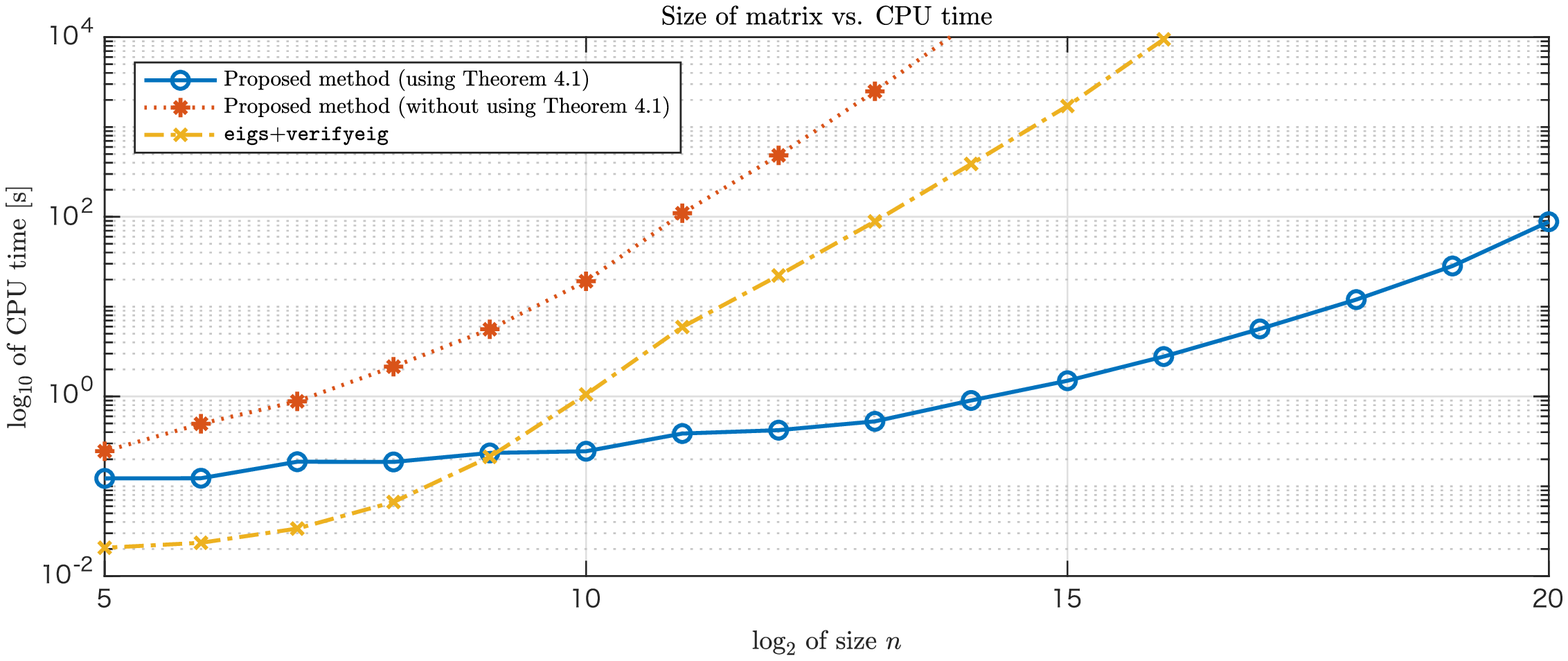}
	\caption{Comparison with \texttt{eigs}+\texttt{verifyeig} in terms of the CPU time.}
	\label{fig:fig1}
\end{figure}
As shown in Figure~\ref{fig:fig1}, the efficient verification technique based on Theorem~\ref{thm:efficientVNC} achieved a substantial improvement of the proposed method in the CPU time, and the proposed method using the technique based on Theorem~\ref{thm:efficientVNC} was faster than the standard method when the size of matrix $n$ is larger than $2^{10}$.
Furthermore, due to the limit of RAM, the standard method did not run for $\ell > 16$.
The proposed method tended to be more effective, as the size of the matrix $n$ becomes large and sparse.
On the other hand, the proposed method diminished more than \texttt{verifyeig} in terms of the error bounds.
Table~\ref{Tab:Ex1} gives the verified eigenvalues for the proposed method for each $\ell$.
For each $\ell$, the digits in single lines are the same as those of the exact eigenvalues, whereas the digits in double lines denote the supremum and infimum of the exact eigenvalues.
Table~\ref{Tab:Ex1} shows that the proposed method succeeded in verifying the eigenvalues at least 5 digits up to $\ell=20$.
For example, for $n=2^{10}$, \texttt{verifyeig} displayed correct 13 digits of the target eigenvalues
\begin{equation*}
1.9908051312881_{2}^{3},~
1.9969350358943_{5}^{6},~
2.0030649703896_{3}^{4},~
2.0091948771994_{3}^{4}.
\end{equation*}
This is mainly due to an overestimation of the error $\tilde{Y}_j - Y_j$ and in particular $\| (z_j B -A)^{-1} \|_2$ (see Theorem~\ref{thm:efficientVNC}).
In addition, we remark that this example~\eqref{eq:ABms} is very ideal to show the effectiveness of the proposed method, thanks to the sparsity of $A$ and $B$ and the simple structure of $B$.

\begin{table}[htbp]
	\caption{Verified eigenvalues for \emph{artificially generated problems 1}.}
	\centering
	\begin{tabular}{c|llll}
		\hline
		$\ell$ & \multicolumn{4}{c}{Eigenvalues near $2$}\\
		\hline
		5 & $1.715370325_{576319}^{647292}$, & $1.9048361618_{37662}^{83239}$, & $2.095163824_{458061}^{553889}$, & $2.284629679_{395375}^{449021}$ \\[1mm]
		6 & $1.8551304200_{49649}^{88382}$, & $1.95167256291_{1146}^{316}$, & $2.048327546_{241311}^{750658}$, & $2.144869623_{845262}^{957331}$ \\[1mm]
		7 & $1.9269559899_{20654}^{32997}$, & $1.97564721303_{2271}^{7844}$, & $2.02435287_{2964093}^{3629513}$, & $2.073044084_{849138}^{931074}$ \\[1mm]
		8 & $1.963329778_{797455}^{856981}$, & $1.9877759682_{78868}^{90228}$, & $2.0122240077_{57747}^{94568}$, & $2.03667024_{4513424}^{5464907}$ \\[1mm]
		9 & $1.981628386_{583106}^{639699}$, & $1.993876062_{151237}^{477787}$, & $2.006123967_{790464}^{896611}$, & $2.01837164_{2984799}^{475554}$ \\[1mm]
		10 & $1.99080513_{0338708}^{2328691}$, & $1.9969350_{32012484}^{40118075}$, & $2.003064970_{239924}^{538566}$, & $2.00919487_{492424}^{9374675}$ \\[1mm]
		11 & $1.99540031_{1711072}^{4525073}$, & $1.998466772_{568391}^{799085}$, & $2.0015332366_{65236}^{74622}$, & $2.004599697_{805267}^{937402}$ \\[1mm]
		12 & $1.997699590_{620666}^{95202}$, & $1.99923319_{1994498}^{2347755}$, & $2.000766798_{470323}^{533992}$, & $2.002300408_{831211}^{946773}$ \\[1mm]
		13 & $1.998849650_{636445}^{85915}$, & $1.999616549_{351151}^{578838}$, & $2.000383446_{084906}^{393767}$, & $2.0011503_{36316831}^{4576172}$ \\[1mm]
		14 & $1.9994247_{87629918}^{92481914}$, & $1.999808262_{201378}^{706644}$, & $2.000191734_{004361}^{485737}$, & $2.000575205_{226003}^{599759}$ \\[1mm]
		15 & $1.99971238_{5433325}^{6833255}$, & $1.99990412_{6923384}^{9815281}$, & $2.0000958_{6580425}^{74431981}$, & $2.00028761_{0865148}^{1877939}$ \\[1mm]
		16 & $1.999856_{177687935}^{205050025}$, & $1.9999520_{36785509}^{89032199}$, & $2.00004_{7548157389}^{8322762835}$, & $2.00014_{360912421}^{4007676757}$ \\[1mm]
		17 & $1.999928_{088198068}^{100686499}$, & $1.9999760_{2728946}^{34322243}$, & $2.00002396_{5890281}^{8809379}$, & $2.00007190_{3283288}^{4763985}$ \\[1mm]
		18 & $1.9999640_{4108165}^{5305932}$, & $1.9999880_{07405001}^{23539268}$, & $2.00001_{1955135737}^{2012521721}$, & $2.0000359_{28694293}^{75584551}$ \\[1mm]
		19 & $1.99998202_{1657987}^{5400559}$, & $1.99999_{399498807}^{4021890356}$, & $2.00000599_{0718144}^{3112617}$, & $2.00001797_{5380837}^{6886183}$ \\[1mm]
		20 & $1.9999910_{09237281}^{13961167}$, & $1.99999_{691993317}^{7090330345}$, & $2.00000_{2940857038}^{3052063462}$, & $2.0000089_{79158735}^{96723335}$ \\[1mm]
		\hline
	\end{tabular}%
	\label{Tab:Ex1}
\end{table}

\paragraph{Artificially generated eigenproblems 2}
Another test matrix pencil $zB-A$ was considered for second numerical example, which is defined by
\[
A = \mathrm{pentadiag}(1,2,3,2,1)\in\mathbb{R}^{100\times 100},\quad B=\diag(1,1,\dots,1,b_{100})\in\mathbb{R}^{100\times 100},
\]
where ``$\mathrm{pentadiag}$'' denotes the pentadiagonal Toeplitz matrix. 
We changed $b_{100}$ as $0$,$10^{-16}$, $10^{-15}$, \dots, $10^{0}$ for illustrating the performance of our method under the case that $B$ is positive semidefinite or ill-conditioned.

We considered six ($m=6$) eigenvalues in $\Omega=[0.95,1.05]$.
We set the parameters $L=3$, $M=2$.
For the scaled eigenproblem, we verified $|\hat{\lambda}|>1.36$ by using INTLAB's function~\texttt{isregular}.

Figure~\ref{fig:fig5_2} shows a transition of verified partial eigenvalues with respect to $b_{100}$ entry.
The six target eigenvalues were plotted in Figure~\ref{fig:fig5_2}\,(a).
Changing $b_{100}$ entry, these values slightly moves between $b_{100}=1$ and $10^{-2}$.
Our proposed method succeeded in including these eigenvalues with the radius up to $10^{-9}$ as shown in Figure~\ref{fig:fig5_2}\,(b).
This result implies that our proposed algorithm works well in the case of the martix $B$ being semidefinite or ill-conditioned.
Finally, we remark that Theorem \ref{thm:efficientVNC} cannot work in this case because $\lambda_\mathrm{min}(B)^{-1}$ becomes very large or infinity.
One should use INTLAB's function~\texttt{verifylss} or another verification methods for linear systems.

\begin{figure}[tp]
	\centering
	\begin{minipage}{0.49\hsize}
		\centering
		\includegraphics[width=\textwidth]{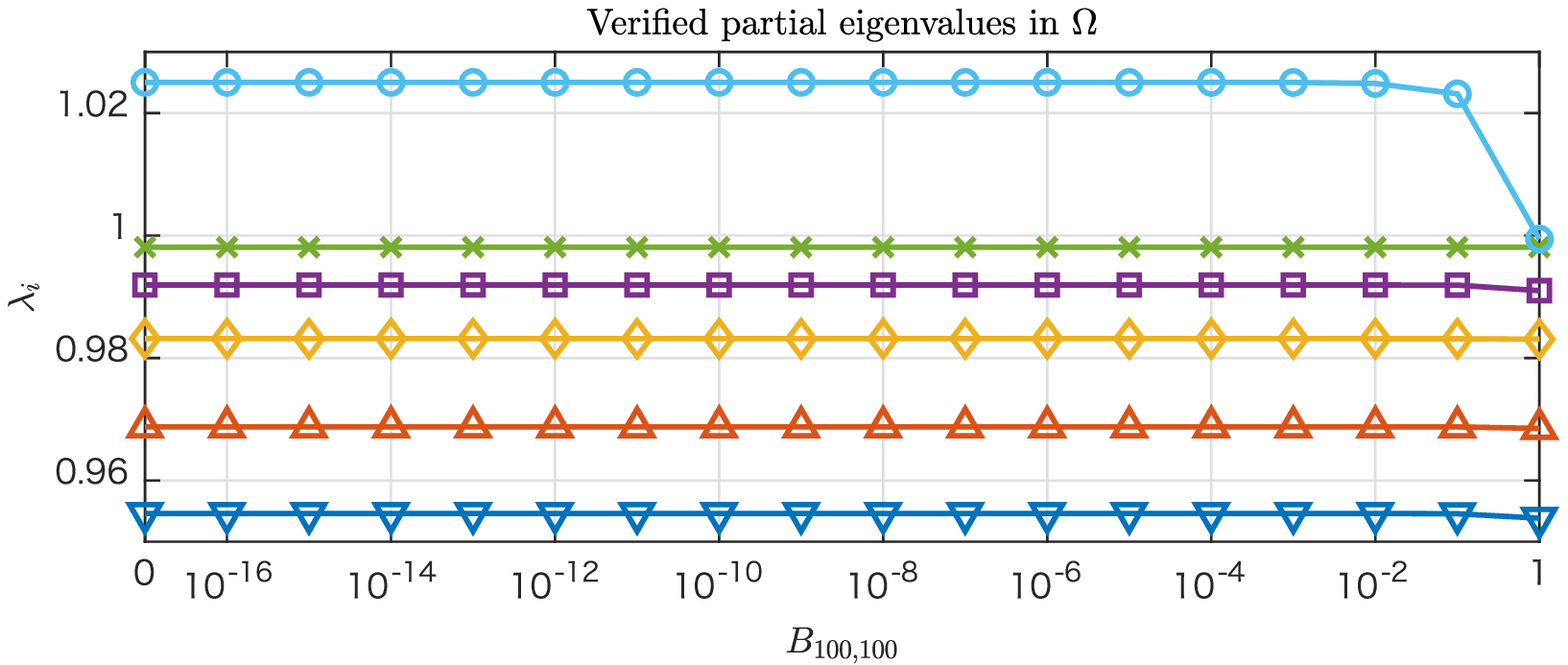}\\
		\subcaption{Transition of the six target eigenvalues}
		\label{fig:transision_semidef}
	\end{minipage}
	\begin{minipage}{0.49\hsize}
		\centering
		\includegraphics[width=\textwidth]{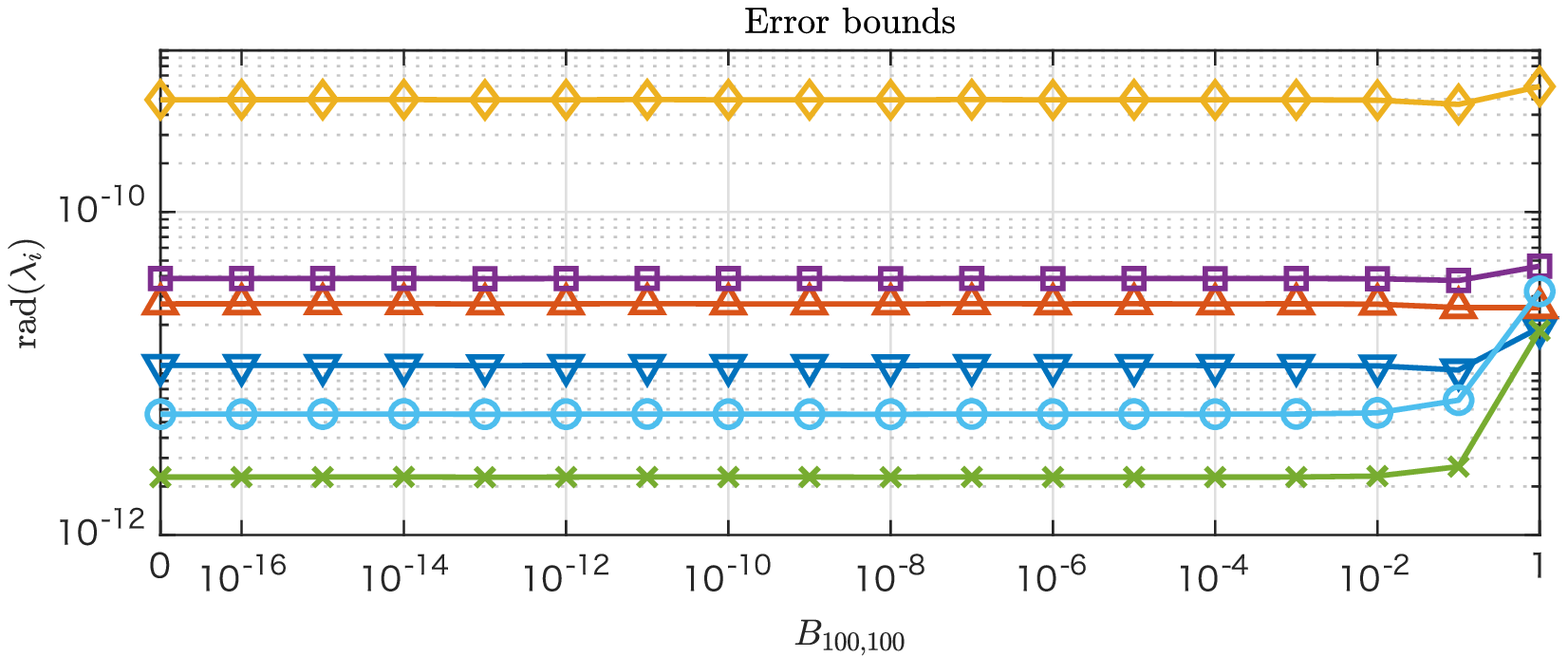}\\
		\subcaption{Radii of verified inclusions}
		\label{fig:results_semidef}
	\end{minipage}
	\caption{%
		Six target eigenvalues \subref{fig:transision_semidef} with the radii of verified inclusions \subref{fig:results_semidef}.
		Each symbol represents an eigenvalue with the same index.
	}
	\label{fig:fig5_2}
\end{figure}

\paragraph{Practical eigenproblems}
Finally, we consider a practical eigenproblem in quantum mechanics.
The verification targets are 52 eigenvalues in the interval $[-0.530, -0.425]$ of the Hermitian generalized eigenvalue problem for \emph{VCNT900}~\cite{CerdaSoria2000,ELSES,HoshiImachiKuwataKakudaFujitaMatsui2019}, which is associated with a vibrating carbon nanotube within a supercell with spd orbitals.
Both matrices $A$ and $B$ have nonzero density 42.8\% and are not sparse.
Figure~\ref{fig:fig2} shows the distribution of the 52 eigenvalues and the outer eigenvalues nearest to $[a, b]$.
To verify the lower bound of $\lambda_\mathrm{min} (B)$, we used Rump's method \cite{Rump2006BIT} using the INTLAB function \texttt{isspd}.
That is, we firstly computed an approximate smallest eigenvalue of $B$ (e.g., by built-in MATLAB function \texttt{eigs}), say $\tilde{\lambda}_\mathrm{min} (B)$.
We secondly checked the positive definiteness of $B-c\tilde{\lambda}_\mathrm{min} (B)I$ using \texttt{isspd} for a certain $c\in (0,1)$.
If the matrix is positive definite, then we adopt $c\tilde{\lambda}_\mathrm{min} (B)$ as the desired lower bound of $\lambda_\mathrm{min} (B)$.
Furthermore, for the scaled eigenproblem, we verified $|\hat{\lambda}|>1.19$ by using INTLAB's function~\texttt{isregular}.
Execution time of this part is about 120 seconds because of our naive implementation.
Indeed, there is a room to improve this part.
For example, we can use an efficient technique given in \cite{Yamamoto2001LAA}, which is based on Sylvester's law of inertia, to verify non-existence of the eigenvalues in the prescribed interval.

\par
The proposed method based on Theorem~\ref{thm:efficientVNC} successfully verified 37 of 52 eigenvalues in 7.9 seconds and failed to obtain the inclusion of the rest 15 eigenvalues.
This is due to an overestimation of the entries of $\tilde{Y}_j - Y_j$.
When using a verification method in INTALB (so-called backslash `\verb|\|') for the linear systems, the proposed method successfully verified all 52 eigenvalues in 36.0 seconds.
The standard method (\texttt{eigs}+\texttt{verifyeig}) also succeeded in verifying all 52 eigenvalues in 5.2 seconds, since the sizes of the matrices are not so large.
Although the most expensive part in Algorithm~\ref{alg:method} is the verification of $\tilde{Y}_j$, we note that  this can be done in parallel for all $j = 1$, $2$, \dots, $N$.

\begin{figure}[t]
	\centering
	\includegraphics[width=\textwidth]{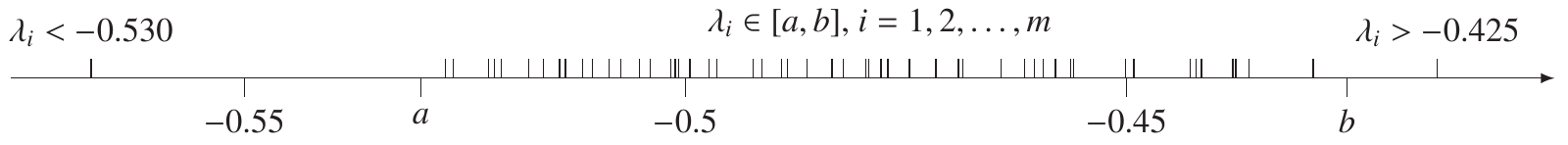}
	\caption{Eigenvalue distribution around $[a,b]=[-0.530, -0.425]$, which shows 52 inner eigenvalues and 2 outer ones. Ticks on the line denote each eigenvalue $\lambda_i$.}
	\label{fig:fig2}
\end{figure}

\section{Conclusions} \label{sec:conclusions}
We proposed a verified computation method for partial eigenvalues of a Hermitian generalized eigenproblem.
A contour integral-type eigensolver, the block Sakurai--Sugiura Hankel method, reduces a given eigenproblem into a generalized eigenproblem of block Hankel matrices consisting of complex moments.
The error of the complex moment can split into the error of numerical quadrature and the rounding error of numerical computations, which should be controlled rigorously.
We derived a truncation error bound of the quadrature and developed an efficient technique to verify the rounding error in the numerical solution of a linear system arising from each quadrature point.
Numerical experiments showed that, as the sizes of matrices become large and sparse, the proposed method outperforms a standard method on artificially generated eigenproblems.
It is also shown that proposed methods is applicable for practical eigenproblems.
We left the issue of how to verify the number of the eigenvalues in the prescribed interval.
Finally, we remark that the proposed method will be potentially efficient in parallel.
This is one of future directions for this research.

\section*{Acknowledgements}
We would like to thank Prof.~Yusaku Yamamoto for letting us know the work \cite{MiyataDuSogabeYamamotoZhang2009}.
We also would like to thank Prof.~Katsuhisa Ozaki for helpful discussions of parallel implementations.
This work was supported in part by the Faculty of Engineering, Information and Systems, University of Tsukuba.
The work of the first author was supported in part by JST/ACT-I (No.~JPMJPR16U6) and JSPS KAKENHI Grant Numbers~17K12690 and 18H03250.
The work of the second author was supported in part by JSPS KAKENHI Grant Number~16K17639 and Hattori Hokokai Foundation.
The work of the third author was supported in part by JSPS KAKENHI Grant Number~18K13453.

\appendix
\section{Regularity of a matrix pencil} \label{app:regular_pencil}
Consider verifying the regularity of matrix pencil $z B - A$ for Hermitian $A$ and Hermitian positive semidefinite $B$.
Recall that a matrix pencil $z B - A$ is said to be singular for square matrices $A$ and $B$ if $\det (z B - A)$ is identically equal to zero; regular otherwise.
The matrix pencil $z B - A$ is regular if and only if $\Ker (\left[ \begin{smallmatrix} A\\ B \end{smallmatrix} \right]) = \lbrace \boldsymbol{0} \rbrace$ \cite[Proposition 7.8.4]{Bernstein2018}.
Hence, we can guarantee the regularity of matrix pencil $z B - A$ by proving positive definiteness of $[B, A] [B, A]^\mathsf{H}$ by using the INTLAB function~\texttt{isspd} in \cite{Rump2006BIT}.

\bibliography{ref}

\begin{thebibliography}{10}

\bibitem{Behnke1988}
\textsc{H.~Behnke}, {\em Inclusion of eigenvalues of general eigenvalue
  problems for matrices}, in Computing Suppl., Springer Vienna, 1988,
  pp.~69--78.

\bibitem{Behnke1991}
\textsc{H.~Behnke}, {\em The calculation of guaranteed bounds for eigenvalues
  using complementary variational principles}, Computing, 47 (1991),
  pp.~11--27.

\bibitem{Bernstein2018}
\textsc{D.~S. Bernstein}, {\em Scaler, {V}ector, and {M}atrix {M}athematics:
  {T}heory, {F}acts, and {F}ormulas}, Princeton University Press, Princeton,
  {R}evised and {E}xpanded~ed., 2018.

\bibitem{CerdaSoria2000}
\textsc{J.~Cerd{\'{a}} and F.~Soria}, {\em Accurate and transferable extended
  {H}\"{u}ckel-type tight-binding parameters}, Phys. Rev. B, 61 (2000),
  pp.~7965--7971.

\bibitem{ELSES}
{\em {ELSES} {M}atrix {L}ibrary}.
\newblock http://www.elses.jp/matrix/.

\bibitem{HoshiImachiKuwataKakudaFujitaMatsui2019}
\textsc{T.~Hoshi, H.~Imachi, A.~Kuwata, K.~Kakuda, T.~Fujita, and H.~Matsui},
  {\em Numerical aspect of large-scale electronic state calculation for
  flexible device material}, Jpn. J. Ind. Appl. Math., 36 (2019), pp.~685--698.

\bibitem{IkegamiSakuraiNagashima2010}
\textsc{T.~Ikegami, T.~Sakurai, and U.~Nagashima}, {\em A filter
  diagonalization for generalized eigenvalue problems based on the
  {S}akurai-{S}ugiura projection method}, J. Comput. Appl. Math., 233 (2010),
  pp.~1927--1936.

\bibitem{ImakuraDuSakurai2016JJIAM}
\textsc{A.~Imakura, L.~Du, and T.~Sakurai}, {\em Relationships among contour
  integral-based methods for solving generalized eigenvalue problems}, Jpn. J.
  Ind. Appl. Math., 33 (2016), pp.~721--750.

\bibitem{MaruyamaOgitaNakayaOishi2004}
\textsc{K.~Maruyama, T.~Ogita, Y.~Nakaya, and S.~Oishi}, {\em Numerical
  inclusion method for all eigenvalues of real symmetric definite generalized
  eigenvalue problem}, IEICE Trans.,  (2004), pp.~1111--1119 (in Japanese).

\bibitem{Miyajima2012JCAM}
\textsc{S.~Miyajima}, {\em Numerical enclosure for each eigenvalue in
  generalized eigenvalue problem}, J. Comput. Appl. Math., 236 (2012),
  pp.~2545--2552.

\bibitem{Miyajima2014SIMAX}
\leavevmode\vrule height 2pt depth -1.6pt width 23pt, {\em Fast enclosure for
  all eigenvalues and invariant subspaces in generalized eigenvalue problems},
  SIAM J. Matrix Anal. Appl., 35 (2014), pp.~1205--1225.

\bibitem{MiyajimaOgitaOishi2005TJSIAM}
\textsc{S.~Miyajima, T.~Ogita, and S.~Oishi}, {\em Numerical verification for
  each eigenvalues of symmetric matrix}, Trans. JSIAM,  (2005), pp.~253--268
  (in Japanese).

\bibitem{MiyajimaOgitaOishi2006TJSIAM}
\leavevmode\vrule height 2pt depth -1.6pt width 23pt, {\em Numerical
  verification for each eigenpair of symmetric matrix}, Trans. JSIAM,  (2006),
  pp.~535--552 (in Japanese).

\bibitem{MiyajimaOgitaRumpOishi2010}
\textsc{S.~Miyajima, T.~Ogita, M.~Rump, and S.~Oishi}, {\em Fast verification
  for all eigenpairs in symmetric positive definite generalized eigenvalue
  problems}, Reliab. Comput., 14 (2010), pp.~24--45.

\bibitem{MiyataDuSogabeYamamotoZhang2009}
\textsc{T.~Miyata, L.~Du, T.~Sogabe, Y.~Yamamoto, and S.-L. Zhang}, {\em An
  extension of the {S}akurai--{S}ugiura method for eigenvalue problems of
  multiply connected region}, Trans. JSIAM,  (2009), pp.~537--550 (in
  Japanese).

\bibitem{Nakatsukasa:2011:APT:2395562}
\textsc{Y.~Nakatsukasa}, {\em Algorithms and {P}erturbation {T}heory for
  {M}atrix {E}igenvalue {P}roblems and the {S}ingular {V}alue {D}ecomposition},
  PhD thesis, University of California, Davis, Davis, CA, USA, 2011.

\bibitem{Ogita2009}
\textsc{T.~Ogita and S.~Oishi}, {\em Fast verified solutions of linear
  systems}, Jpn. J. Ind. Appl. Math., 26 (2009), pp.~169--190.

\bibitem{OishiOgitaRump2009}
\textsc{S.~Oishi, T.~Ogita, and S.~M. Rump}, {\em Iterative refinement for
  ill-conditioned linear systems}, Jpn. J. Ind. Appl. Math., 26 (2009),
  pp.~465--476.

\bibitem{Rump1989}
\textsc{S.~M. Rump}, {\em Guaranteed inclusions for the complex generalized
  eigenproblem}, Computing, 42 (1989), pp.~225--238.

\bibitem{Rump1999}
\leavevmode\vrule height 2pt depth -1.6pt width 23pt, {\em {INTLAB} ---
  {INT}erval {LAB}oratory}, in Developments in Reliable Computing, Kluwer
  Academic Publishers, Dordrecht, 1999, pp.~77--104.

\bibitem{Rump2006BIT}
\leavevmode\vrule height 2pt depth -1.6pt width 23pt, {\em Verification of
  positive definiteness}, BIT, 46 (2006), pp.~433--452.

\bibitem{Rump2010}
\leavevmode\vrule height 2pt depth -1.6pt width 23pt, {\em Verification
  methods: Rigorous results using floating-point arithmetic}, Acta Numerica, 19
  (2010), pp.~287--449.

\bibitem{Rump2013JCAM}
\leavevmode\vrule height 2pt depth -1.6pt width 23pt, {\em Accurate solution of
  dense linear systems, part {I}: {A}lgorithms in rounding to nearest}, J.
  Comput. Appl. Math., 242 (2013), pp.~157--184.

\bibitem{WatanabeYamamotoNakao1999TJSIAM}
\textsc{Y.~Watanabe, N.~Yamamoto, and M.~Nakao}, {\em Verification methods of
  generalized eigenvalue problems and its applications}, Trans. JSIAM,  (1999),
  pp.~137--150 (in Japanese).

\bibitem{Yamamoto2001LAA}
\textsc{N.~Yamamoto}, {\em A simple method for error bounds of eigenvalues of
  symmetric matrices}, Linear Algebra Appl., 324 (2001), pp.~227--234.

\bibitem{Yamamoto1984JJAM}
\textsc{T.~Yamamoto}, {\em Error bounds for approximate solutions of systems of
  equations}, Japan Journal Appl. Math., 1 (1984), pp.~157--171.

\end{thebibliography}

\end{document}